\def\ov#1{{\overline{#1}}}
\def\wh#1{{\widehat{#1}}}
\newcommand{\longhookrightarrow}{\lhook\joinrel\longrightarrow}
\newcommand{\codim}{{\operatorname{codim}}}
\newcommand{\h}{{\operatorname{h}}}
\newcommand{\Gm}{{\mathbb G}_{\rm m}}
\newcommand{\cyc}{{\operatorname{cyc}}}
\newcommand{\ct}{{\operatorname{ct}}}
\renewcommand{\and}{{\quad \text{ and } \quad }}
\newcommand{\im}{{\operatorname{im}}}
\renewcommand{\Im}{{\operatorname{im}}}
\newcommand{\Adj}{{\operatorname{adj}}}
\newcommand{\Spec}{{\operatorname{Spec}}}
 \newcommand{\C}{{\mathbb{C}}}
\newcommand{\G}{{\mathbb{G}}} \newcommand{\K}{{\mathbb{K}}} 
 \newcommand{\N}{{\mathbb{N}}} 
\renewcommand{\P}{{\mathbb{P}}} \newcommand{\Q}{{\mathbb{Q}}} 
\newcommand{\Z}{{\mathbb{Z}}}
\newcommand{\cE}{{\mathcal{E}}}
\newcommand{\cO}{{\mathcal{O}}} \newcommand{\cP}{{\mathcal{P}}}
\newcommand{\bfa}{{\boldsymbol{a}}}
\newcommand{\bfb}{{\boldsymbol{b}}}
\newcommand{\bfc}{{\boldsymbol{c}}}
\newcommand{\bfd}{{\boldsymbol{d}}}
\newcommand{\bfe}{{\boldsymbol{e}}}
\newcommand{\bft}{{\boldsymbol{t}}}
\newcommand{\bfu}{{\boldsymbol{u}}}
\newcommand{\bfx}{{\boldsymbol{x}}}
\newcommand{\bfy}{{\boldsymbol{y}}}
\newcommand{\bfzero}{{\boldsymbol{0}}}
\newcounter{thm}
\numberwithin{equation}{section}
\numberwithin{thm}{section}
\theoremstyle{definition}
\newtheorem{definition}[thm]{Definition}
\newtheorem{remark}[thm]{Remark}
\newtheorem{example}[thm]{Example}
\theoremstyle{plain}
\newtheorem{lemma}[thm]{Lemma}
\newtheorem{proposition}[thm]{Proposition}
\newtheorem{theorem}[thm]{Theorem}
\newtheorem*{theorem*}{Theorem}
\newtheorem{corollary}[thm]{Corollary}
\newtheorem{conjecture}[thm]{Conjecture}
\newtheorem{prop-def}[thm]{Proposition-Definition}
\newcommand{\xg}{{\boldsymbol{x}}}
\newcommand{\ag}{{\boldsymbol{a}}}
\newcommand{\bg}{{\boldsymbol{b}}}
\newcommand{\cg}{{\boldsymbol{c}}}
\newcommand\red{\color{red}}
\renewcommand\red{}
\begin{document}

\title[Factorization of bivariate sparse polynomials]{Factorization of bivariate sparse polynomials}
  
\author[Amoroso]{Francesco Amoroso} 
\address{Laboratoire de
  math\'ematiques Nicolas Oresme, CNRS UMR 6139, Universit\'e de
  Caen. BP 5186, 14032 Caen Cedex, France}
\email{francesco.amoroso@unicaen.fr}
\urladdr{\url{http://www.math.unicaen.fr/~amoroso/}}

\author[Sombra]{Mart{\'\i}n~Sombra} \address{Instituci\'o Catalana de
  Recerca i Estudis Avançats (ICREA). Passeig Llu\'is Companys~23,
  08010 Barcelona, Spain \vspace*{-2.5mm}} 
\address{Departament de
  Matem\`atiques i Inform\`atica, Universitat de Barcelona (UB). Gran
  Via 585, 08007 Bar\-ce\-lo\-na, Spain} 
\email{sombra@ub.edu}
\urladdr{\url{http://www.maia.ub.edu/~sombra}}

\date{12/9/2018} 
\subjclass[2010]{Primary 13P05; Secondary 12Y05.}

\thanks{Amoroso was partially supported by the CNRS research project
  PICS 6381 ``Diophantine geometry and computer algebra''. Sombra was
  partially supported by the MINECO research project MTM2015-65361-P}

\begin{abstract}
  We prove a function field analogue of a conjecture of Schinzel on
  the factorization of univariate polynomials over the rationals.  We
  derive from it a finiteness theorem for the irreducible
  factorizations of the bivariate Laurent polynomials in families with
  fixed set of complex coefficients and varying exponents. Roughly speaking,
  this result shows that the truly bivariate irreducible factors of
  these sparse Laurent polynomials, are also sparse.
  {\red The proofs are based on a variant of the toric Bertini's
  theorem due to Zannier and Fuchs, Mantova and Zannier.}
\end{abstract}

\maketitle

\section[Introduction]{Introduction}
\label{sec:introduction}

A polynomial is said to be \emph{sparse} (or \emph{lacunary}) if it
has few terms compared with its degree.  The factorization problem for
sparse polynomials can be vaguely stated as the question of whether
the irreducible factors of a sparse polynomial are also sparse, apart
from obvious exceptions.  Aspects of this problem have been studied in
various settings and for different formalizations of the notion of
sparsenness, see for instance \cite{Lenstra:flp, Schinzel:psrr,
  KaltofenKoiran:fsdfmslpanf,
  AvendanoKrickSombra:fbslp,FilasetaGranvilleSchinzel:igcdasp,
  Grenet:bdflmp, Am-So-Za}. Several of these studies were based on
tools from Diophantine geometry like lower bounds for the height of
points and subvarieties, and unlikely intersections of subvarieties
and subgroups of a torus.

In this text, we consider families of bivariate Laurent polynomials
given as the pullback of a \emph{fixed} regular function on a torus by
a \emph{varying} 2-parameter monomial map. Precisely, let $t,z$ be
variables, $\bfx=(x_{1},\dots,x_{n})$ a set of other $n$ variables,
and
$$F\in \C[\bfx^{\pm1},z^{\pm1}]=\C[x_{1}^{\pm1},\dots,
x_{n}^{\pm1},z^{\pm1}]$$
a Laurent polynomial. For each vector $\bfa=(a_{1},\dots,a_{n})\in \Z^{n}$, we consider the bivariate
Laurent polynomial given as the pullback of $F$ by the monomial map
\begin{math}
  (t,z)\mapsto (t^{\bfa},z)= (t^{a_{1}}, \dots, t^{a_{n}},z)
\end{math}, 
that is 
\begin{equation} \label{eq:11}
F_{\bfa}=  F(t^{\bfa},z)\in \C[t^{\pm1},z^{\pm1}].
\end{equation}
The number of coefficients of each $F_{\bfa}$ is bounded by those of
$F$, and so these Laurent polynomials can be considered as sparse when
$F$ is fixed and $\bfa$ is large.

A first question concerns the irreducibility of $F$. It has been
addressed in~\cite{Zannier:hiaag}, as we next describe.  Let us assume
$F$ irreducible. Under which assumptions $F_{\bfa}$ stays irreducible
for a {\it generic} $\bfa$? Let us consider the following example. The
Laurent polynomial
$F=z^{2}-x_{1}x_{2}^{2}\in \C[x_{1}^{\pm1}, x_{2}^{\pm1},z^{\pm1}]$.
is irreducible. However given
$(a_{1},a_{2})\in \Z^{2}\setminus \{(0,0)\}$ with $a_{1}$ even, the
Laurent polynomial $F_{\bfa}$ is reducible. This show that the sole
assumption that $F$ is irreducible is not enough to get an
irreducibility statement for a generic specialization. 

Zannier's result can be stated as follows. For $\bfa,\bfb\in \Z^{n}$,
we denote by $\langle \bfa,\bfb\rangle=\sum_{i=1}^{n}a_{i}b_{i}$ their
scalar product.

\begin{theorem}[{{\cite[Theorem 3]{Zannier:hiaag}}}]
  Let $F \in \C[\bfx^{\pm 1},z^{\pm 1}]\setminus \C[\bfx^{\pm1}]$ be
  an irreducible Laurent polynomial, that is monic in $z$ and such
  that $F(x_{1}^d,\dots,x_{n}^d,z)$ is irreducible for $d=\deg_z(F)$.
  There is a finite subset
  $\Sigma \subset \Z^{n}\setminus \{\bfzero\}$ such that, for
  each $\bfa\in \Z^{n}$, either there is $\cg\in\Sigma$ with
  $\langle \bfc,\bfa\rangle =0$, or $F(t^\bfa,z)$ is irreducible.
\end{theorem}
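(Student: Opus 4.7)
The plan is to view $F$ geometrically as the equation of a cover of tori. Since $F$ is irreducible and monic in $z$ of degree $d=\deg_z(F)$, the hypersurface $V = V(F) \subset \Gm^{n} \times \A^{1}$ is geometrically irreducible and the first projection $\pi \colon V \to \Gm^{n}$ is finite of degree $d$. Each $\bfa \in \Z^{n}$ determines a group homomorphism $\varphi_{\bfa} \colon \Gm \to \Gm^{n}$, $t \mapsto (t^{a_{1}}, \dots, t^{a_{n}})$, with image the subtorus $H_{\bfa}$; the Laurent polynomial $F_{\bfa}$ cuts out the fiber product $V \times_{\Gm^{n}} \Gm$ along $\varphi_{\bfa}$, so its irreducibility is exactly the geometric irreducibility of this pullback.

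The main tool is the toric Bertini theorem of Zannier, in the refined form of Fuchs--Mantova--Zannier alluded to in the abstract. Applied to the cover $\pi$, it produces a finite list of proper subtori $T_{1}, \dots, T_{k} \subsetneq \Gm^{n}$ such that, for every $1$-dimensional subtorus $H \subset \Gm^{n}$ not contained in any of the $T_{i}$, the restricted cover $\pi^{-1}(H) \to H$ remains irreducible. For each $T_{i}$ one selects a nonzero character $\bfc_{i} \in \Z^{n}$ vanishing on $T_{i}$, so that $H_{\bfa} \subset T_{i}$ forces $\langle \bfc_{i}, \bfa\rangle = 0$. Taking $\Sigma$ to contain the $\bfc_{i}$ therefore settles the theorem whenever $\bfa$ is primitive.

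The step I expect to be the main obstacle is the reduction from the general case to the primitive one. Writing $\bfa = e\bfa'$ with $\bfa'$ primitive we have $F_{\bfa}(t,z) = F_{\bfa'}(t^{e}, z)$, which can become reducible even when $F_{\bfa'}(t,z)$ is not: geometrically the $[e]$-cover of $H_{\bfa'}$ may split the restricted cover, and algebraically this is a Kummer-type obstruction coming from abelian subextensions of $\C(\bfx)[z]/(F)$ over $\C(\bfx)$ generated by radicals of monomials. This is exactly the role of the hypothesis that $F(x_{1}^{d}, \dots, x_{n}^{d}, z)$ is irreducible: pulling back $F$ along the isogeny $[d] \colon \Gm^{n} \to \Gm^{n}$ keeps it irreducible, ruling out any such radical subextension at the generic point. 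A generic-fiber argument then shows that the $1$-dimensional subtori along which a Kummer obstruction persists are cut out by finitely many additional characters, which we absorb into $\Sigma$ to conclude.
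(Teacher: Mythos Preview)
This theorem is cited from \cite{Zannier:hiaag} and is not proved in the present paper; the closest thing here is the proof of Theorem~\ref{BT-poly}, which in the PB case (so that alternative~\eqref{item:9} is empty) specializes to exactly this statement.

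Your setup and the primitive case are correct and match that specialization: the hypothesis ``$F(x_1^d,\dots,x_n^d,z)$ irreducible'' is precisely the property PB for $\pi\colon V(F)\to\Gm^n$ (Definition~\ref{def:1} together with \cite[Proposition~2.1]{Zannier:hiaag}, quoted just after it), so Theorem~\ref{FMZ} applies and produces the exceptional subtori, hence the characters in $\Sigma$, for primitive $\bfa$.

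The non-primitive step, however, is left too vague to be a proof. You are right that for $\bfa=m\bfa'$ one has $F(t^{\bfa},z)=f(t^m,z)$ with $f=F(t^{\bfa'},z)$, and that PB rules out the Kummer obstruction at the generic point of $\Gm^n$. But PB for $\pi$ does not automatically descend to PB for the restricted one-dimensional cover $\pi^{-1}(T_{\bfa'})\to T_{\bfa'}$, and since $m$ is unbounded your ``generic-fiber argument'' does not by itself produce a \emph{finite} list of additional characters. The paper's mechanism (in the proof of Theorem~\ref{BT-poly}) is Lemma~\ref{lemm:1}: if $f\in\C(t)[z]$ is irreducible of degree $d$ and $f(t^m,z)$ is reducible, then already $f(t^e,z)$ is reducible for some $e\mid\gcd(m,d)$. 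This reduces everything to the finitely many Laurent polynomials $F(\bfx^e,z)$ with $e\mid d$; each is irreducible and still satisfies PB (since $[k]^*([e]^*W)=[ke]^*W$ is irreducible for all $k$), so one applies Theorem~\ref{FMZ} to each and takes for $\Sigma$ the union of the resulting character sets. Equivalently, and perhaps closer to what you intend, one can apply Theorem~\ref{FMZ} a second time to $[d]^*W$: outside the resulting exceptional set one gets $F(t^{d\bfa'},z)$ irreducible, which is exactly PB for the restricted cover $\pi^{-1}(T_{\bfa'})\to T_{\bfa'}$, and then $f(t^m,z)$ is irreducible for every $m$. Either way, this second invocation of the toric Bertini theorem has to be made explicit; without it the finiteness of $\Sigma$ is not established.
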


{\red As already remarked by the author, the classical Bertini theorem 
may be seen as a version of statement of this shape for ${\Bbb G}^n_{\rm a}$.}

Previously, Schinzel \cite{Sch1} proved a similar result in the same
direction, for Laurent polynomials over $\Q$ satisfying the strong
additional assumption that $F$ is not self-inversive. More recently,
Fuchs, Mantova and Zannier \cite[Addendum to
Theorem~1.5]{FuchsMantovaZannier:fiptvb} showed that the set $\Sigma$
can be chosen independently of the coefficients of $F$.

In the present paper we are  interested in the factorization of
$F_{\bfa}$. Our motivation is an old conjecture of Schinzel
\cite{Schinzel:rppt} on the factorisation of sparse polynomial with
rational coefficients (Conjecture~\ref{Sch0}).  

This conjecture implies the statement below. A Laurent polynomial in
$\Q[\bfx^{\pm 1}]$ is \emph{cyclotomic} if it can be written as a unit
times the composition of a univariate cyclotomic polynomial with a
monomial.

\begin{conjecture}
  Let $F\in \Q[\bfx^{\pm1}]$. There is a finite set of matrices
  $\Omega\subset \Z^{n\times n}$ satisfying the following
  property. For each $\bfa\in\Z^n$, there are $M\in\Omega$ and
  $\bfb\in\Z^{n}$ with $\bfa=M\bfb$ such that if $P$ is an irreducible
  factor of $F(\bfx^M)$, then $P(t^{\bfb})$ is either a product of
  cyclotomic Laurent polynomials, or
  an irreducible factor of $F(t^{\bfa})$.
\end{conjecture}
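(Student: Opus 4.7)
The plan is to proceed by induction on $n$, combining the toric Bertini theorem of Zannier---in the coefficient-independent form of Fuchs--Mantova--Zannier---with Laurent's theorem on torsion points on subvarieties of algebraic tori.

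\textbf{Reduction to the irreducible case.} Write $F=\prod_{i}F_{i}$ with each $F_{i}\in\Q[\bfx^{\pm 1}]$ irreducible. Assuming finite sets $\Omega_i$ of $n\times n$ matrices valid for each $F_i$, I would construct $\Omega$ as follows: for every tuple $(M_i)_i$ with $M_i\in\Omega_i$, add to $\Omega$ each $n\times n$ matrix $M$ whose image lattice is contained in $\bigcap_i\im(M_i)$ and arises as one of the finitely many sublattices obtained this way. Given $\bfa\in\Z^n$, choosing compatible $M_i\in\Omega_i$ for each $i$ together with a common $M\in\Omega$ with $\bfa\in\im(M)\subseteq\bigcap_i\im(M_i)$, the conclusion for $F$ follows factor-by-factor from the conclusion for each $F_i$. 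This reduces the problem to $F$ irreducible and (since the cyclotomic and monomial cases are trivial) non-cyclotomic.

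\textbf{Main induction.} For $F$ irreducible and non-cyclotomic, toric Bertini yields a finite $\Sigma_F\subset\Z^n\setminus\{\bfzero\}$ such that, whenever $\langle\bfc,\bfa\rangle\neq 0$ for all $\bfc\in\Sigma_F$, the Laurent polynomial $F(t^\bfa)$ is the product of an irreducible one and of cyclotomic factors. Laurent's theorem describes the possible cyclotomic contamination: the torsion cosets lying on $V(F)\subset\Gm^n$ form a finite union $\bigcup_j\zeta_jH_j$ of torsion translates of subtori, an invariant of $F$ alone. For generic $\bfa$ (in the sense above), I would choose $M$ so that its image sublattice is compatible both with the characters cutting out the generic geometric component of $V(F)$ and with those defining the cosets $\zeta_jH_j$; then $F(\bfx^M)$ splits in $\Q[\bfx^{\pm 1}]$ into irreducible pieces which, under $\bfx=t^\bfb$ with $\bfa=M\bfb$, specialize either to cyclotomic polynomials or to the non-cyclotomic irreducible part of $F(t^\bfa)$. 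Finitely many such $M$'s cover all necessary residue patterns of $\bfa$. If instead $\langle\bfc,\bfa\rangle=0$ for some $\bfc\in\Sigma_F$, then $\bfa\in\Lambda_\bfc:=\bfc^\perp\cap\Z^n$, a rank-$(n-1)$ sublattice; fixing an $n\times n$ matrix $N_\bfc$ with $\im(N_\bfc)=\Lambda_\bfc$, write $\bfa=N_\bfc\bfb$ and apply the inductive hypothesis to $F(\bfx^{N_\bfc})$, viewed as depending effectively on $n-1$ variables. Composing the inductively obtained matrices with $N_\bfc$ produces the matrices serving for $\bfa\in\Lambda_\bfc$, and the union over $\bfc\in\Sigma_F$ remains finite.

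\textbf{Main obstacle.} The principal difficulty is the precise synchronization of the two ingredients. Toric Bertini provides irreducibility only ``modulo cyclotomic factors'', and Laurent's theorem determines which such factors can occur; matching them cleanly so that every irreducible factor $P$ of $F(\bfx^M)$ specializes under $\bfx=t^\bfb$ to either a \emph{pure} product of cyclotomic Laurent polynomials or to a \emph{single} irreducible factor of $F(t^\bfa)$---never a mixture, and never with spurious multiplicity---requires choosing $M$ whose image lattice simultaneously encodes the sublattice behind the generic Bertini irreducibility and the characters cutting out the torsion cosets. It is this synchronization, rather than the mere finiteness of candidate matrices, that embodies the technical depth of the conjecture and explains why the statement remains conditional: sharpening the toric Bertini theorem to keep explicit track of cyclotomic torsion seems to be the missing ingredient.
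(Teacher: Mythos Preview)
The statement under discussion is presented in the paper as a \emph{conjecture}, not a theorem; the paper does not prove it. What the paper does claim is that it is \emph{implied by} Schinzel's Conjecture~\ref{Sch0}, and Remark~\ref{rem:7} indicates that this implication would run exactly as the derivation of Theorem~\ref{thm:2} from Theorem~\ref{thm:10} in Section~\ref{sec:fact-sparse-polyn}: assume Schinzel's conjecture for an irreducible non-cyclotomic $F$, handle the cyclotomic part separately, and induct on $n$ by passing to a hyperplane $\bfc^{\perp}\cap\Z^{n}$ whenever $\langle\bfc,\bfa\rangle=0$ for some $\bfc$ in the finite exceptional set. Your inductive architecture in the last paragraph matches this reduction scheme.

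Where your proposal departs from the paper is in attempting an \emph{unconditional} proof via toric Bertini plus Laurent's theorem, and this is where the genuine gap lies. The toric Bertini theorems of Zannier and Fuchs--Mantova--Zannier concern the \emph{geometric} irreducibility over $\C$ of fibers of a dominant map $W\to\Gm^{n}$; in the paper's applications, $W=V(F)\subset\Gm^{n+1}$ for $F\in\C[\bfx^{\pm1},z^{\pm1}]$ and one studies $F(t^{\bfa},z)$ as a polynomial in $z$ over $\C(t)$. In the present statement there is no auxiliary variable $z$: one has $F\in\Q[\bfx^{\pm1}]$ and asks about the factorization of the \emph{univariate} $F(t^{\bfa})$ over $\Q$. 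There is no cover of $\Gm^{n}$ to which toric Bertini applies, and the assertion you attribute to it---that for generic $\bfa$ the polynomial $F(t^{\bfa})$ is irreducible up to cyclotomic factors---is not a consequence of any known Bertini-type result; it is essentially the $M=I_{n}$ case of Schinzel's conjecture itself. Laurent's theorem locates the torsion cosets on $V(F)$, hence the \emph{possible} cyclotomic factors, but it says nothing about the $\Q$-irreducibility of what remains. You recognize this yourself in the final paragraph: the ``missing ingredient'' you describe is precisely Schinzel's conjecture, which is why the paper states the result conditionally rather than as a theorem.
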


Our main result in this text is the following function field analogue.

\begin{theorem} 
\label{thm:2}
Let $F\in \C[\bfx^{\pm1},z^{\pm1}]$. There is a finite set of matrices
$\Omega\subset \Z^{n\times n}$ satisfying the following property. For
  each $\bfa\in\Z^n$, there are $M\in\Omega$ and $\bfb\in\Z^{n}$ with
$\bfa=M\bfb$ such that if $P $ is an irreducible factor of
$F(\bfx^M,z)$, then $P(t^{\bfb},z)$ is, as an element of
$\C(t)[z^{\pm1}]$, either a unit or an irreducible factor of
$F(t^{\bfa},z)$.
\end{theorem}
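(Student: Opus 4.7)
The plan is to induct on $n$, the number of $\bfx$-variables, with the base case $n=0$ being trivial. To make the induction run smoothly I would first strengthen the statement to a \emph{multi-polynomial} form: given any finite family $F_1,\dots,F_k$ in $\C[\bfx^{\pm 1},z^{\pm 1}]$, there is a single $\Omega$ which witnesses the conclusion simultaneously for each $F_i$. This stronger statement allows me to reduce to the case where every $F_i$ is irreducible, since each irreducible factor of $F(\bfx^M,z)$ is an irreducible factor of some $F_i(\bfx^M,z)$.

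The core of the inductive step is an application of the Zannier--Fuchs--Mantova--Zannier toric Bertini theorem cited in the introduction. After normalising each $F_i$ to be monic in $z$ (modulo a Laurent-monomial unit) and possibly pre-composing with a power substitution $\bfx\mapsto\bfx^{dI}$ to fulfil the ``irreducibility of $F_i(\bfx^{dI},z)$'' hypothesis of the quoted theorem, one obtains, for each $F_i$ depending nontrivially on $z$, a finite set $\Sigma_i\subset\Z^n\setminus\{\bfzero\}$ such that $F_i(t^\bfa,z)$ stays irreducible as soon as $\langle\bfc,\bfa\rangle\neq 0$ for every $\bfc\in\Sigma_i$. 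An irreducible factor $F_i$ not involving $z$ is a degenerate case: $F_i(t^\bfa,z)=F_i(t^\bfa)$ is a unit in $\C(t)[z^{\pm1}]$ unless $\bfa$ lies in the finite union of hyperplanes on which $F_i$ vanishes identically, and I append those hyperplanes to $\Sigma_i$. Put $\Sigma=\bigcup_i\Sigma_i$.

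For $\bfa$ annihilated by no $\bfc\in\Sigma$, the choice $M=I$, $\bfb=\bfa$ suffices. Otherwise $\bfa$ lies on some hyperplane $\bfc^\perp$ with $\bfc\in\Sigma$, and I treat each such $\bfc$ separately. Choose an $n\times n$ integer matrix $M_\bfc$ whose first $n-1$ columns form a $\Z$-basis of $\bfc^\perp\cap\Z^n$ and whose last column is $\bfzero$, so that the column image of $M_\bfc$ is exactly $\bfc^\perp\cap\Z^n$; then every such $\bfa$ may be written $\bfa=M_\bfc\bfb_0$ with $\bfb_0=(\bfb',0)$ and $\bfb'\in\Z^{n-1}$. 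The crucial observation is that, by construction, every row of $M_\bfc$ has last entry zero, hence $F_i(\bfx^{M_\bfc},z)$ does not involve $x_n$ and defines a Laurent polynomial $\wt F_{i,\bfc}\in\C[x_1^{\pm1},\dots,x_{n-1}^{\pm1},z^{\pm1}]$. Applying the inductive hypothesis in its multi-polynomial form to the family $\{\wt F_{i,\bfc}\}_i$ with $n-1$ variables produces a finite set $\wt\Omega_\bfc\subset\Z^{(n-1)\times(n-1)}$. The desired $\Omega$ is the union of $\{I\}$ with all matrices $M_\bfc\cdot\bigl(\begin{smallmatrix}\wt M&0\\0&0\end{smallmatrix}\bigr)$ for $\bfc\in\Sigma$ and $\wt M\in\wt\Omega_\bfc$. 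The identity $(\bfx^B)^A=\bfx^{AB}$ then gives $F_i(\bfx^M,z)=\wt F_{i,\bfc}(\wt\bfx^{\wt M},z)$, and the substitution $\bfx=t^{(\wt\bfb,0)}$ with $\bfb'=\wt M\wt\bfb$ transports the inductive conclusion for $\wt F_{i,\bfc}$ at $\bfb'$ into the required conclusion for $F_i$ at $\bfa$, since $\wt F_{i,\bfc}(t^{\bfb'},z)=F_i(t^\bfa,z)$.

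The principal obstacle is the toric Bertini input. One needs a finite set $\Sigma$ whose complement in $\Z^n$ governs the preservation of irreducibility under monomial specialisations, and this must be extractable uniformly across the irreducible factors appearing at every level of the recursion and independently of the $\C$-coefficients involved. The uniform formulation of Fuchs--Mantova--Zannier is precisely what furnishes this; once it is in hand, the rest of the argument is a clean reduction, via the sublattice $\bfc^\perp\cap\Z^n$ and the matrix $M_\bfc$, of the problem for $\bfa\in\bfc^\perp$ to an $(n-1)$-variable instance of the same theorem.
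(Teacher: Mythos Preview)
Your induction on $n$ and the reduction to $n-1$ variables through the sublattice $\bfc^{\perp}\cap\Z^{n}$ are exactly how the paper organises the proof of Theorem~\ref{thm:2}, and the multi-polynomial strengthening is a harmless bookkeeping device. The genuine gap lies earlier, in the ``generic'' branch where $\bfa$ avoids all hyperplanes of $\Sigma$.

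You claim that, after normalising and possibly pre-composing with $\bfx\mapsto\bfx^{dI}$, every irreducible $F_i$ admits a finite $\Sigma_i$ such that $F_i(t^{\bfa},z)$ remains irreducible whenever $\langle\bfc,\bfa\rangle\neq0$ for all $\bfc\in\Sigma_i$. This is false precisely when the PB hypothesis of Zannier's theorem fails. Take $F=z^{2}-x_{1}x_{2}^{2}$ (the paper's Example~\ref{exm:4}): it is irreducible, yet $F(t^{a_{1}},t^{a_{2}},z)=z^{2}-t^{a_{1}+2a_{2}}$ factors whenever $a_{1}$ is even, and the set of such $(a_{1},a_{2})$ is not contained in any finite union of hyperplanes. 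Your pre-composition by $2I$ produces the factorisation $F(\bfx^{2I},z)=(z-x_{1}x_{2}^{2})(z+x_{1}x_{2}^{2})$, but the matrix $2I$ only captures $\bfa\in 2\Z^{2}$; the vector $\bfa=(2,1)$ lies in no hyperplane and not in $\Im(2I)$, yet $F(t^{2},t,z)=(z-t^{2})(z+t^{2})$ is reducible. Your argument supplies no $M\in\Omega$ for this $\bfa$. (A smaller, fixable issue: ``monic in $z$ modulo a Laurent-monomial unit'' is impossible when the $z$-leading coefficient $G_i$ is not itself a monomial; one must instead work with the cover $V(F_i)\setminus V(G_i)\to\Gm^n$ and enlarge $\Sigma_i$ by the support hyperplanes of $G_i$.)

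The paper closes the PB gap by first proving a sharpened toric Bertini statement (Theorem~\ref{BT-poly}) with a three-way alternative: off finitely many hyperplanes, either $F(t^{\bfa},z)$ is irreducible, \emph{or} $\bfa\in\Im(M)$ for some $M$ in a finite set $\Phi$ with $F(\bfx^{M},z)$ reducible. In the example this second alternative is met by $M=\left(\begin{smallmatrix}2&0\\0&1\end{smallmatrix}\right)$, whose image contains $(2,1)$. That extra branch then drives a \emph{second} induction, on $\deg_{z}(F)$ (Theorem~\ref{thm:1}): when $F(\bfx^{M},z)$ splits, its factors have strictly smaller $z$-degree, and one recurses on them. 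Your single induction on $n$, with only Zannier's original theorem as input, cannot substitute for this.
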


Moreover, we also obtain in Theorem \ref{thm:1} the function field
analogue of Conjecture~\ref{Sch0}.

Theorem \ref{thm:2} shows that for each $\bfa\in \Z^{n}$, there is a
matrix $M$ within the finite set $\Omega \subset \Z^{n\times n}$ and a
vector $\bfb\in \Z^{n}$ with $\bfa=M\bfb$ such that, unless
$F\big(\bfx^M,z\big)=0$, the irreducible factorization
\begin{equation}
  \label{eq:2}
F\big(\bfx^M,z\big)=\prod_{P}P(\bfx,z)^{e_{P}}  
\end{equation}
yields the irreducible factorization in the ring $\C(t)[z^{\pm1}]$
\begin{displaymath}
F_{\bfa}= \gamma\, \prod_{P} {\vphantom\prod}'P(t^{\bfb},z)^{e_{P}},
\end{displaymath}
for $F_{\bfa}$ as in \eqref{eq:11}, the
product being over the irreducible factors $P$ in \eqref{eq:2} such
that $P(t^{\bfb},z)$ is not a unit, and with
$\gamma\in\C(t)[z^{\pm1}]^{\times }$.

Hence, the irreducible factorizations in $\C(t)[z^{\pm1}]$ of the
$F_{\bfa}$'s can be obtained by specializing the irreducible
factorizations of the Laurent polynomials $F(\bfx^M,z)$ for a
\emph{finite} number of matrices~$M$. These irreducible factors of the
$F_{\bfa}$'s are {sparse}, in the sense that they are all represented
as the pullback of a finite number of regular functions on the
$(n+1)$-dimensional torus $\Gm^{n+1}$ by 2-parameter monomial maps.
In particular, both the number of these irreducible factors and of
their coefficients are bounded above independently of $\bfa$.

The proof of Theorem \ref{thm:2} relies on a variant of the
aforementioned result of Zannier. To state it, we first introduce
some further notation.  Let $\bft =(t_1,\ldots,t_{k})$ be a set of
$k$ variables. A matrix $A=(a_{i,j})_{i,j}\in\Z^{n\times k}$ defines
the family of $n$ monomials in the variables $\bft$ given by
$$
\bft^A=\Big(\prod_{j=1}^{k} t_j^{a_{1,j}},\ldots,\prod_{j=1}^k t_j^{a_{n,j}}\Big).
$$

Given $\bfa=(a_{1},\dots, a_{n})\in \Z^{n}$, we can consider it as a
row vector or as a column vector. Thus 
\begin{displaymath}
  \bfx^{\bfa}= \prod_{j=1}^{n}x_{j}^{a_{j}} \and  t^{\bfa}=(t^{a_{1}},\dots, t^{a_{n}}).
\end{displaymath}

\begin{theorem}
\label{BT-poly}
Let $F \in \C[\bfx^{\pm 1},z^{\pm1}]\setminus \C[\bfx^{\pm1}]$ be an
irreducible Laurent polynomial, and $G\in \C[\bfx^{\pm 1}]$ the
coefficient of the term of highest degree in the variable $z$. 
There are finite subsets $\Phi\subset \Z^{n \times n}$ of nonsingular
matrices and $\Sigma \subset \Z^{n}$ of nonzero vectors such that, for
$\bfa\in \Z^{n}$, one of the next alternatives holds:
\begin{enumerate}
\item \label{item:8} there is $\cg\in\Sigma$ such that
  $ \langle  \bfc,\bfa\rangle =0$;
\item \label{item:9} there is $M\in\Phi$ such that $\bfa\in \Im(M) $
  and $F(\xg^M,z)$ is reducible;
\item \label{item:10} the Laurent polynomial
  $F(t^\bfa,z) \in \C[ t^{\pm1},z^{\pm1}]$ is irreducible in
  $\C[t^{\pm1},z^{\pm1}]_{G(t^{\bfa})}$.
\end{enumerate}
\end{theorem}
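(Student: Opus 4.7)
The plan is to combine a Galois-theoretic analysis of the Kummer part of the extension associated with $F$ with the toric Bertini theorem of Zannier, in the strengthened form of Fuchs--Mantova--Zannier.

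Set $K=\C(\bfx)$ and let $\widetilde L/K$ be the Galois closure of the finite extension $K[z]/(F)$. The largest subextension of $\widetilde L/K$ which is Kummer over $K$ is a finite abelian extension, corresponding via Kummer theory on the split torus $\Gm^n$ to a finite-index sublattice $\Lambda_0\subset\Z^n$ with Galois group $A=\Z^n/\Lambda_0$. A direct Kummer-theoretic computation shows that, for a nonsingular matrix $M\in\Z^{n\times n}$, the pullback $F(\bfx^M,z)$ is reducible over $K$ if and only if $\Im(M)+\Lambda_0\ne\Z^n$, equivalently $\Im(M)$ is contained in some proper subgroup of $\Z^n$ containing~$\Lambda_0$. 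Since $A$ is finite, there are only finitely many maximal proper subgroups of $\Z^n$ containing $\Lambda_0$; call them $H_1,\dots,H_s$, and for each $i$ choose a nonsingular $M_i\in\Z^{n\times n}$ with $\Im(M_i)=H_i$. Setting $\Phi=\{M_1,\dots,M_s\}$, by construction $F(\bfx^{M_i},z)$ is reducible for every~$i$, so whenever $\bfa$ lies in some $H_i$, alternative~(2) holds.

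For $\bfa\in\Z^n$ avoiding every $H_i$---equivalently, those whose image in $A$ is not contained in any proper subgroup---the Kummer obstruction to irreducibility of the specialization has been exhausted, and one can invoke Zannier's toric Bertini theorem. Concretely, I would pass to the Kummer cover $\pi\colon\Gm^n\to\Gm^n$ associated with $\Lambda_0$; an irreducible factor $\widehat F$ of the pullback of $F$ under $\pi$ has trivial maximal Kummer subextension and so satisfies the saturation hypothesis of Zannier's theorem. Applying that theorem, together with the coefficient-independent refinement of Fuchs--Mantova--Zannier, to $\widehat F$ produces a finite set of exceptional nonzero vectors on the cover, which translates to a finite set $\Sigma\subset\Z^n$ on the base: for $\bfa$ avoiding $\bigcup_i H_i$, either $\langle\bfc,\bfa\rangle=0$ for some $\bfc\in\Sigma$, giving alternative~(1), or $F(t^\bfa,z)$ is irreducible in $\C[t^{\pm1},z^{\pm1}]_{G(t^\bfa)}$, giving alternative~(3).

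The main obstacle is carrying out the Kummer descent cleanly. Establishing the Kummer criterion for reducibility of $F(\bfx^M,z)$ requires some care with the duality between characters and cocharacters of $\Gm^n$. More substantially, one must verify that the irreducible factor $\widehat F$ on the Kummer cover really does satisfy Zannier's saturation hypothesis, and then organise the descent so that the exceptional vectors on the cover produce a genuinely finite $\Sigma$ on the base; this relies on the fact that a class in $A$ generating the whole group lifts canonically to a cocharacter on the cover. A secondary technical point is that in alternative~(3) one asks only for irreducibility in the localization $\C[t^{\pm1},z^{\pm1}]_{G(t^\bfa)}$: this weakening is unavoidable because Zannier's conclusion concerns the geometric generic fiber of~$V(F)$ and may fail at those~$\bfa$ where the leading coefficient of $F$ in $z$ specializes to zero.
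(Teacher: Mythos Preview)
Your high-level plan coincides with the paper's: both reduce to a map satisfying the property PB by factoring $\pi\colon V(F)\setminus V(G)\to\Gm^n$ through a maximal isogeny $\lambda$ of $\Gm^n$, and then invoke the toric Bertini theorem on the remaining piece. What you call the ``maximal Kummer subextension'' corresponds, via Proposition~\ref{red} and Corollary~\ref{cor:2}, to this isogeny $\lambda$, and your lattice $\Lambda_0$ to the image of its cocharacter map. Your reducibility criterion for $F(\bfx^M,z)$ is then exactly Proposition~\ref{red} rephrased. So the approach is not genuinely different from the paper's; it is the same decomposition, phrased algebraically rather than geometrically.

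There is, however, a real gap in your descent. You assert that a vector $\bfa$ whose class generates $A=\Z^n/\Lambda_0$ ``lifts canonically to a cocharacter on the cover'', but this is backwards: lifting $\bfa$ through the isogeny $\lambda$ means solving $\Lambda\bfa'=\bfa$, which requires $\bfa\in\Lambda_0$, the \emph{opposite} of generating $A$ nontrivially. Hence there is no direct identification of $F(t^{\bfa},z)$ with any $\widehat F(t^{\bfa'},z)$, and Zannier's theorem applied to $\widehat F$ does not by itself control $F(t^{\bfa},z)$. The paper circumvents this by working geometrically: rather than lifting the cocharacter, it analyses the preimage $\lambda^{-1}(T_{\bfa})$ as a union of torsion cosets (Lemma~\ref{split}) and packages the resulting trichotomy into Theorem~\ref{TBT}, valid for primitive $\bfa$. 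The passage from primitive to arbitrary $\bfa$ is then a separate reduction (Lemma~\ref{lemm:1}): reducibility of $F(t^{m\bfb},z)$ forces reducibility of $F(t^{e\bfb},z)$ for some $e\mid\gcd(m,\deg_z F)$, so one enlarges $\Sigma$ and $\Phi$ by running the argument for $F(\bfx^e,z)$ over the finitely many divisors $e$ of $\deg_z F$. This step has no counterpart in your outline, and without it the argument fails even when $F$ already satisfies PB. For instance $F=z^2-(x_1+x_2)$ has trivial Kummer part, yet $F(t^2,t^2,z)=(z-\sqrt{2}\,t)(z+\sqrt{2}\,t)$ is reducible; placing $\bfa=(2,2)$ in alternative~\eqref{item:8} requires exactly the divisor argument applied to $F(x_1^2,x_2^2,z)$.
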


Back to the factorization problem for sparse polynomials, it is
natural to consider the more general setting of pullbacks of regular
functions on $\Gm^{n}$ by \emph{arbitrary} monomial maps, instead of
only those appearing in \eqref{eq:11}. Let $\bfy=(y_{1},\dots, y_{n})$
and $\bft=(t_{1},\dots, t_{k})$ be groups of $n$ and $k$ variables,
respectively. For a Laurent polynomial $H\in \C[\bfy^{\pm1}]$,
consider the family of $k$-variate Laurent polynomials given by the
pullback of $H$ by the monomial map $\Gm^{k}\to \Gm^{n}$ defined by
$\bft\mapsto \bft^{A}$ for a matrix $A\in \Z^{n\times k}$, that is
\begin{equation*} 
H_{A}=  H(\bft^{A})\in  \C[\bft^{\pm1}] .
\end{equation*}

Denote by $S$ the multiplicative subset of $\C[\bft^{\pm1}]$
generated by the Laurent polynomials of the form $f(\bft^{\bfd})$ for
$f\in \C[z^{\pm1}]$ and $\bfd\in \Z^{k}$.

We propose the following conjecture which, as explained in Remark
\ref{rem:10}, partially generalizes Theorem~\ref{thm:2}.

\begin{conjecture}
  \label{conj:3}
  Let $H\in \C[\bfy^{\pm1}]$ and $k\ge 2$. There is a finite set of
  matrices $\Omega\subset \Z^{n\times n}$ {satisfying the following
    property. For each $A\in\Z^{n\times k}$, there are} $N\in\Omega$ and
  $B\in\Z^{n\times k}$ with $A=N B$ such that if $P $ is an
  irreducible factor of $H(\bfy^N)$, then $P(\bft^{B})$ is, as an
  element of $\C[\bft^{\pm1}]_{S}$, either a unit or an irreducible
  factor of $H(\bft^{A})$.
\end{conjecture}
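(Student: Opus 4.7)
The plan is to mirror the strategy used for Theorem~\ref{thm:2} but without the help of a distinguished variable playing the role of $z$. First, I would reduce to the case where $H$ is irreducible: factor $H = \prod H_i$ into irreducibles, apply the conjecture to each $H_i$ to obtain a finite set $\Omega_i$, and take a common refinement of the $\Omega_i$'s, using that the intersection-style operation on finite families of sublattices of $\Z^n$ still yields finitely many matrices covering all required factorizations $A = NB$. Hence it suffices to establish: for each irreducible $H \in \C[\bfy^{\pm 1}]$, there is a finite $\Omega \subset \Z^{n \times n}$ such that for every $A \in \Z^{n\times k}$ one can write $A = NB$ with $N \in \Omega$, and each irreducible factor $P$ of $H(\bfy^N)$ satisfies that $P(\bft^B)$ is either a unit in $\C[\bft^{\pm 1}]_S$ or an irreducible factor of $H(\bft^A)$.

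The central technical ingredient should be a multivariate analogue of Theorem~\ref{BT-poly}: there exist finite subsets $\Phi \subset \Z^{n\times n}$ of nonsingular matrices and $\Sigma \subset \Z^{n}$ of nonzero vectors such that, for every $A \in \Z^{n \times k}$ with $k \ge 2$, one of the following holds: (i) the image $\Im(A)$ is contained in the hyperplane orthogonal to some $\bfc \in \Sigma$; (ii) there is $M \in \Phi$ with $\Im(A) \subseteq \Im(M)$ and $H(\bfy^M)$ reducible; or (iii) $H(\bft^A)$ is irreducible in $\C[\bft^{\pm 1}]_S$. Granting this, Conjecture~\ref{conj:3} would follow by induction on $n$. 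In case (i), the columns of $A$ factor through a sublattice of rank $n-1$ orthogonal to $\bfc$; choosing a basis for this sublattice produces a factorization $A = N_{\bfc} A'$ with $A' \in \Z^{(n-1) \times k}$, and one applies the inductive hypothesis to the Laurent polynomial on the lower-dimensional torus obtained by restricting $H$ along $N_{\bfc}$. In case (ii) one takes $N = M$ and factors $H(\bfy^M)$ into irreducibles, recursing on each factor and collecting the finitely many matrices produced. In case (iii) one takes $N = I$ and $B = A$, since $H(\bft^A)$ is already irreducible modulo $S$.

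To prove the multivariate Bertini statement, the natural route is to apply the toric Bertini theorem of Fuchs--Mantova--Zannier~\cite{FuchsMantovaZannier:fiptvb} to the generic monomial map $\bft \mapsto \bft^A$ over the function field $\C(\bft)$, viewing irreducibility of $H(\bft^A)$ in $\C[\bft^{\pm 1}]_S$ as the assertion that the pullback of the hypersurface $\{H = 0\} \subset \Gm^n$ by this map remains irreducible away from the one-parameter subtori of $\Gm^k$. The role of the multiplicative set $S$ is exactly to allow for irreducible factors arising from $H$'s behaviour along $1$-dimensional directions in $\Gm^k$, which no genericity assumption on $A$ can eliminate; this mirrors how in Theorem~\ref{BT-poly} one must localize at $G(t^{\bfa})$ to kill factors coming from the leading term in $z$.

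The main obstacle is precisely this last step: promoting the Fuchs--Mantova--Zannier statement from generic irreducibility in $\C(\bft)[\cdot]$ to irreducibility modulo $S$ in a form uniform in $A$. For $k = 2$, Theorem~\ref{BT-poly} exploits the privileged variable $z$ so that the only genuine degenerate locus is the vanishing of the single leading coefficient $G(t^{\bfa})$; for $k \ge 2$ the degenerate locus to be inverted by $S$ is much richer, parameterised by all $\bfd \in \Z^k$ and all univariate Laurent polynomials $f$, and controlling it uniformly in $A$ while extracting only finitely many matrices $\Omega$ is the principal technical difficulty. This is, I expect, the reason the statement is formulated as a conjecture rather than a theorem, and I would anticipate that a positive answer requires a substantially strengthened version of the toric Bertini theorem accommodating arbitrary monomial maps together with an effective control of how $\Im(A)$ interacts with the finite stratification of $\Gm^n$ into bad sublocii.
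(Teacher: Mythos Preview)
The statement you are addressing is Conjecture~\ref{conj:3}, which the paper explicitly leaves open; there is no proof in the paper to compare against. The paper only remarks that the conjecture ``might follow from a suitable toric analogue of the classical Bertini's theorem'' and then formulates that analogue as Conjecture~\ref{conj_TBT}, which is itself left open. Your proposal is therefore not a proof but an outline of a strategy, and you yourself acknowledge this in your final paragraph.

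That said, your outline is well aligned with what the paper suggests. Your ``multivariate analogue of Theorem~\ref{BT-poly}'' is essentially what one would obtain by specializing Conjecture~\ref{conj_TBT} to the hypersurface $V(H)\subset\Gm^n$ and the map $\varphi$ given by the monomial substitution $\bft\mapsto\bft^A$; the three alternatives you list correspond to the three alternatives in Conjecture~\ref{conj_TBT}. Your inductive scheme on $n$, reducing along a hyperplane $\bfc^\perp$ in case~(i) and recursing on the irreducible factors of $H(\bfy^M)$ in case~(ii), is the natural extension of the proof of Theorem~\ref{thm:2} from Theorem~\ref{thm:1}, and would go through if the Bertini-type input were available.

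The genuine gap is exactly the one you name: the passage from the Fuchs--Mantova--Zannier theorem to a statement controlling irreducibility in $\C[\bft^{\pm1}]_S$ uniformly in $A$. In the paper's setting the distinguished variable $z$ forces the map $\pi\colon V(F)\setminus V(G)\to\Gm^n$ to be dominant and finite onto its image, so Theorem~\ref{FMZ} applies directly; without such a variable the relevant map need not be dominant, and one is thrown back onto Conjecture~\ref{conj_TBT}. This is not a flaw in your reasoning but the actual open problem.
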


The validity of this conjecture would imply that the irreducible
factors of the $H_{A}$'s that truly depend on more than one variable,
are also the pullback of a finite number of regular functions on
$\Gm^{n}$ by $k$-parameter monomial maps.  The possible univariate
irreducible factors of the $H_{A}$'s split completely, and so they
cannot be accounted from a finite number of such regular functions.

This conjecture might follow from a suitable toric analogue of the
classical Bertini's theorem that we propose in Conjecture \ref{conj_TBT}.

\medskip \noindent {\bf Plan of the paper.}  In Section
\ref{sec:conj-sche-funct} we state Schinzel's conjecture and our
function field analogue (Theorem \ref{thm:10}).  In
Section~\ref{sec:toric-anal-bert} we recall some facts on fiber
products and prove a variant of the Fuchs-Mantova-Zannier theorem
concerning the irreducibility of pullbacks of cosets by a dominant
maps $W\rightarrow \Gm^n$ (Theorem~\ref{TBT}).  In Section
\ref{sec:pullb-laur-polyn} we prove Theorem \ref{BT-poly}, wereas in
Section \ref{sec:fact-sparse-polyn} we apply this result to prove
Theorem \ref{thm:10} and then Theorem \ref{thm:2}.

\medskip \noindent {\bf Acknowledgments.} We thank Pietro Corvaja,
Qing Liu, Vincenzo Mantova, Juan Carlos Naranjo and Umberto Zannier
for useful conversations. We also thank the anonymous referee for
his/her useful comments. Part of this work was done while the authors
met the Universitat de Barcelona and the Universit\'e de Caen. We thank
these institutions for their hospitality.

\section{A conjecture of Schinzel and its function field analogue} \label{sec:conj-sche-funct}

In~\cite{Schinzel:rppt}, Schinzel proposed the conjecture below on the
factorization of univariate polynomials over $\Q$.



\begin{conjecture}
\label{Sch0}
Let $F\in\Q[\bfx^{\pm1}]$ be a non-cyclotomic irreducible Laurent
polynomial. There are finite sets $\Omega^{0}\subset \Z^{n\times n}$
of nonsingular matrices and $\Gamma\subset \Z^n$ of nonzero vectors
satisfying the following property. Let $\bfa\in\Z^n$; then one of the next
conditions holds:
\begin{enumerate}
\item \label{item:2} there is $\bfc\in\Gamma$ verifying
 $\langle \bfc,\bfa\rangle=0$;
\item \label{item:1} there are $M\in\Omega^{0}$ and $\bfb\in\Z^n$ with
 $\bfa=M\bfb$ such that if 
$$
F(\bfx^M)=\prod_{P}P^{e_{P}}
$$ 
is the irreducible factorization of $F(\bfx^M)$, then
\begin{displaymath}
\frac{F(t^{\bfa})}{\cyc(F(t^{\bfa})}=\prod_{P}\Big(\frac{P(t^{\bfb})}{\cyc(P(t^{\bfb}))}\Big)^{e_{P}}
\end{displaymath}
is the
irreducible factorization of ${F(t^{\bfa})}/{\cyc(F(t^{\bfa})})$.
\end{enumerate}
\end{conjecture}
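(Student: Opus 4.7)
The plan is to mirror the proof strategy used by the authors for the function field analogue (Theorem~\ref{thm:2}), substituting Diophantine inputs for the algebro-geometric ones. The main reduction should be to an \emph{arithmetic} variant of Theorem~\ref{BT-poly}: for an irreducible non-cyclotomic $F\in\Q[\xg^{\pm1}]$, produce finite sets $\Phi\subset\Z^{n\times n}$ of nonsingular matrices and $\Sigma\subset\Z^n\setminus\{\bfzero\}$ such that, for every $\ag\in\Z^n$, one of the following holds: $\langle\cg,\ag\rangle=0$ for some $\cg\in\Sigma$; $\ag\in\Im(M)$ for some $M\in\Phi$ with $F(\xg^M)$ reducible; or $F(t^\ag)$ is irreducible in $\Q[t^{\pm1}]$ \emph{modulo cyclotomic factors}, that is, in the localization of $\Q[t^{\pm 1}]$ by the multiplicative system generated by compositions of univariate cyclotomic polynomials with monomial maps.

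Given such an arithmetic Bertini statement, the deduction of Conjecture~\ref{Sch0} would be essentially formal, following the pattern of the function field case. Set $\Omega^{0}=\Phi$ and $\Gamma=\Sigma$. For $\ag\in\Z^n$ not annihilated by any $\cg\in\Gamma$, pick $M\in\Phi$ with $\ag\in\Im(M)$ and write $\ag=M\bg$. Factor $F(\xg^M)=\prod_{P}P^{e_P}$ into $\Q$-irreducibles. Applying $\xg\mapsto t^\bg$ and invoking the arithmetic Bertini statement applied to each irreducible $P$, the specialization $P(t^\bg)$ is a cyclotomic factor times at most one further $\Q$-irreducible. Comparing with $F(t^\ag)=F((t^\bg)^M)$ and using unique factorization in $\Q[t^{\pm1}]$ would then match the two sides, giving the factorization of $F(t^\ag)/\cyc(F(t^\ag))$ demanded by the conjecture.

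The hard step is the arithmetic Bertini itself, and this is precisely the reason Conjecture~\ref{Sch0} is still open. In the function field proof, the role of cyclotomic factors is played by units of $\C(t)[z^{\pm1}]$, and these are controlled by the Manin--Mumford property for $\Gm^n$ together with the theorems of Zannier and of Fuchs--Mantova--Zannier; the ``torsion cosets'' are only finitely many and their contribution can be bounded uniformly. Over $\Q$, the analogous control would require uniform lower bounds for the heights of non-torsion points on subvarieties of $\Gm^n$, strong enough to rule out infinite families of specializations producing non-cyclotomic factors of arbitrarily small height, and uniform in $\ag$ so as to survive pullback by arbitrary monomial maps. This is essentially a Bogomolov-type assertion of effective and uniform character that is not available with the present technology. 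Without such a height input, it is unclear how to enforce the dichotomy between a cyclotomic factor and a genuinely new irreducible factor after specialization, and this is the precise point at which the function field argument cannot be transplanted directly.
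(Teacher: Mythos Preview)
The statement you are addressing is not a theorem of the paper but Schinzel's \emph{conjecture}, which the paper explicitly records as open for $n\ge 2$ (see the discussion immediately following Conjecture~\ref{Sch0}). There is therefore no proof in the paper to compare your proposal against; the paper only establishes the function field analogue, Theorem~\ref{thm:10}, and deduces Theorem~\ref{thm:2} from it.

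Your proposal is not a proof and, to your credit, does not pretend to be one: you outline how an ``arithmetic toric Bertini'' would reduce the conjecture to a formal descent, and then correctly isolate the missing ingredient as a uniform Bogomolov/height-type statement controlling the non-cyclotomic part after specialization. This diagnosis is consistent with the paper's viewpoint (cf.\ Remark~\ref{rem:3} on the height-theoretic analogy between cyclotomic and constant factors, and Remark~\ref{rem:7}). One caveat on your formal reduction: even granting the hypothetical arithmetic Bertini, the step ``apply it to each irreducible $P$ of $F(\xg^M)$'' hides real work. In the paper's proof of Theorem~\ref{thm:1}, passing from $F$ to its factors requires an induction on $\deg_z$, and the matrices coming from the different factors must be reconciled by intersecting the lattices $\Im(M_i)$ and choosing a common $M'$; your sketch does not indicate how the finite sets $\Omega^0$ and $\Gamma$ would be assembled from those attached to the individual $P$'s, nor how to guarantee that a \emph{single} $M$ and $\bfb$ work simultaneously for all factors.
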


For the validity of this statement, in its condition \eqref{item:1} it
is necessary to take out the cyclotomic part of $F(t^{\bfa})$ and of
the $P(t^{\bfb})$'s, as shown by the example below.

\begin{example}
 \label{exm:2}
 Set $F=x_{1}+x_{2}-2\in \Q[x_{1}^{\pm1}, x_{2}^{\pm1}]$.  Let
 $\bfa\in \Z^{2}$ and choose a nonsingular matrix
 $M\in \Z^{2\times2}$ and a vector $\bfb\in \Z^{2}$ with
 $\bfa=M\bfb$.  We have that
\begin{displaymath}
F(\bfx^{M})=x_{1}^{m_{1,1}}x_{2}^{m_{1,2}}+x_{1}^{m_{2,1}}x_{2}^{m_{2,2}}-2
\end{displaymath}
is irreducible, and so $P:=F(\bfx^{M})$ is the only irreducible factor
of this Laurent polynomial.  However, $t-1$ divides
$F(t^{\bfa})=P(t^{\bfb})$, and so these univariate Laurent polynomials
are not irreducible, unless we divide them by this cyclotomic factor.
\end{example}

Schinzel proved this conjecture when $n=1$ in \emph{loc. cit.} and,
under the restrictive hypothesis that $F$ is not self-inversive, when $n\ge2$
\cite{Schinzel:rlpI}, see also \cite[\S 6.2]{Schinzel:psrr}.  The
general case when $n\ge 2$ remains open. 

In Section~\ref{sec:fact-sparse-polyn}, we prove the function field
analogue for Laurent polynomials over the field $\C(z)$ in Theorem
\ref{thm:10} below. An element of $\C(z)[\bfx^{\pm 1}]$ is
\emph{constant} if it lies in $\C[\bfx^{\pm1}]$, up to a scalar factor
in $ \C(z)^{\times}$.  The \emph{constant part} of a Laurent
polynomial $F\in \C(z)[\bfx^{\pm1}]\setminus \{0\}$, denoted by
$\ct(F)$, is defined as its maximal constant factor. This constant
part is well-defined up to a unit of $\C(z)[\bfx^{\pm1}]$.

\begin{remark}
 \label{rem:3}
 The analogy between cyclotomic Laurent polynomials over $\Q$ and
 irreducible constant Laurent polynomials over $\C(z)$ stems from
 height theory. Let $\K$ denote either $\Q$ or $\C(z)$, and $\h$ the
 canonical height function on subvarieties of the torus
 $\G_{\rm m, \K}^{n}$, induced by the standard inclusion
 $\G_{\rm m, \K}^{n}\hookrightarrow \P^{n}_{\K}$.

 Let $F\in \K[\bfx^{\pm1}]$ be an irreducible Laurent polynomial
 defining a hypersurface $V(F)$ of $\Gm^{n}$. Then the condition that
 $\h(V(F))=0$ is equivalent to the fact that $F$ is cyclotomic when
 $\K=\Q$, and to the fact that $F$ is constant when $\K=\C(z)$.
\end{remark}

\begin{theorem}
\label{thm:10}
Let $F\in \C(z)[\bfx^{\pm1}]$ be a non-constant irreducible Laurent
polynomial. There are finite sets $\Omega^{0}\subset \Z^{n\times n}$
of nonsingular matrices and $\Gamma\subset \Z^n$ of nonzero vectors
satisfying the following property. Let $\bfa\in\Z^n$; then one of the next
conditions holds:
\begin{enumerate}
\item \label{item:14} there is $\bfc\in\Gamma$ verifying
 $\langle \bfc,\bfa\rangle=0$;
\item \label{item:15} there are $M\in\Omega^{0}$ and $\bfb\in\Z^n$
 with $\bfa=M\bfb$ such that if
$$
F(\bfx^M)=\prod_{P}P^{e_{P}}
$$ 
is the irreducible factorization of $F(\bfx^M)$, then 
\begin{displaymath}
\frac{F(t^{\bfa})}{\ct(F(t^{\bfa}))}=\prod_{P}\Big(\frac{P(t^{\bfb})}{\ct(P(t^{\bfb}))}\Big)^{e_{P}}
\end{displaymath}
is the irreducible factorization of $F(t^{\bfa})/\ct(F(t^{\bfa}))$.
\end{enumerate}
\end{theorem}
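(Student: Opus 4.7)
My approach is to reduce to Theorem~\ref{BT-poly} by clearing denominators in~$z$, and then recursively apply the theorem to the irreducible factors produced by its alternative~\eqref{item:9}, building the finite data $(\Omega^{0},\Gamma)$ step by step. Multiply $F$ by an element of $\C(z)^{\times}$ to produce $\tilde F\in\C[\bfx^{\pm1},z^{\pm1}]$ that is primitive as a polynomial in~$\bfx$ over $\C[z^{\pm1}]$. By Gauss's lemma, $\tilde F$ is irreducible in $\C[\bfx^{\pm1},z^{\pm1}]$, and non-constancy of $F$ in the sense of \emph{loc.\ cit.} becomes $\tilde F\notin\C[\bfx^{\pm1}]\cup\C[z^{\pm1}]$. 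Theorem~\ref{BT-poly} then applies and produces finite sets $\Phi\subset\Z^{n\times n}$ and $\Sigma\subset\Z^n\setminus\{\bfzero\}$. I initialize $\Omega^{0}:=\{I_{n}\}$ and $\Gamma:=\Sigma$.

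For each $M\in\Phi$ for which $\tilde F(\bfx^M,z)$ is reducible, consider its irreducible factorization $\tilde F(\bfx^M,z)=\prod_j P_j(\bfx,z)^{e_j}$ in $\C[\bfx^{\pm1},z^{\pm1}]$ (up to a unit); by Gauss this is also the irreducible factorization of $F(\bfx^M)$ in $\C(z)[\bfx^{\pm1}]$. Factors $P_j$ lying in $\C[\bfx^{\pm1}]$ are constant in the sense of $\C(z)[\bfx^{\pm1}]$; upon $\bfx\mapsto t^\bfb$ they are absorbed by $\ct$ on both sides of condition~\eqref{item:15} and contribute trivially. For each remaining, non-constant $P_j$, apply the theorem recursively to obtain $(\Omega^{0}_{j},\Gamma_{j})$. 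Add to $\Omega^{0}$ every product $M\cdot N$, where $N$ is a common refinement of an arbitrary choice $(N_j)_j\in\prod_j\Omega^{0}_j$---that is, a nonsingular integer matrix with $\im(N)\subseteq\bigcap_j\im(N_j)$---and add to $\Gamma$ every vector $\operatorname{adj}(M)^{\top}\bfc$ with $\bfc\in\bigcup_j\Gamma_j$. The identity $\langle\operatorname{adj}(M)^{\top}\bfc,\bfa\rangle=\det(M)\,\langle\bfc,\bfb\rangle$ for $\bfa=M\bfb$ ensures that sub-case~\eqref{item:14} propagates correctly upward.

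Verification goes as follows. Given $\bfa\in\Z^n$ not killed by any vector of $\Gamma$, alternative~\eqref{item:8} of Theorem~\ref{BT-poly} is excluded. If alternative~\eqref{item:10} holds, the localization at $G(t^\bfa)$ combined with Gauss's lemma forces $\tilde F(t^\bfa,z)=c(t)\,Q(t,z)$ with $c\in\C[t^{\pm1}]$ and $Q\in\C[t^{\pm1},z^{\pm1}]$ primitive irreducible; absorbing $c$ (together with the $\C(z)^{\times}$-scalar coming from clearing denominators) into $\ct(F(t^\bfa))$ yields condition~\eqref{item:15} with $M=I_n,\bfb=\bfa$. If instead alternative~\eqref{item:9} holds, the inductive case~\eqref{item:15} must fire on every non-constant branch (otherwise $\bfa$ would be killed by some $\operatorname{adj}(M)^{\top}\bfc\in\Gamma$), and the common refinement $N$ of the resulting matrices, together with $MN\in\Omega^{0}$, produces the required factorization compatibility after absorbing constant collapses.

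The main obstacle is termination of the recursion on $\tilde F\to P_j\to P_j'\to\cdots$, where at each step one passes to an irreducible factor of the preceding polynomial pulled back by a matrix from that level's $\Phi$. Geometrically this is a stabilization statement for the splitting of toric pullbacks of the hypersurface $V(\tilde F)\subset\Gm^{n+1}$: there should exist a nonsingular $M_0$ such that the irreducible components of the pullback of $V(\tilde F)$ under $\bfx\mapsto\bfx^{M_0 N}$ remain irreducible for all further nonsingular $N$, an offshoot of the toric Bertini-type stability results of Zannier and Fuchs--Mantova--Zannier that lie behind Theorem~\ref{BT-poly} itself; this terminates the recursion and keeps $\Omega^0$ finite. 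A secondary delicate point is that the substitution $\bfx\mapsto t^\bfb$ can collapse an honest bivariate factor $P_j(\bfx,z)$ into a product of a $\C(z)^{\times}$-scalar and an element of $\C[t^{\pm1}]$ (e.g.\ $x_1-zx_2$ becomes $t(1-z)$ at $\bfb=(1,1)$), and one must verify that every such collapse is exactly absorbed by the $\ct$-operation on both sides of condition~\eqref{item:15}.
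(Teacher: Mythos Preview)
Your overall architecture---clear denominators, invoke Theorem~\ref{BT-poly}, and recurse on the irreducible factors produced by its alternative~\eqref{item:9}, pushing the $\Gamma$-data upward via $\operatorname{adj}(M)$---is exactly the paper's strategy (carried out in the proof of Theorem~\ref{thm:1}, from which Theorem~\ref{thm:10} is then read off via Gauss's lemma). The only genuine gap is the point you yourself flag as ``the main obstacle'': termination of the recursion.

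You do not need any further toric Bertini--type stabilization for this. The induction is simply on $\deg_z(\tilde F)$. The key fact is Lemma~\ref{specialization}\eqref{item:19}: for any nonsingular $M$, the pullback $\tilde F(\bfx^{M},z)$ has no nontrivial factor in $\C[\bfx^{\pm1}]$. Hence every irreducible factor $P_j$ of $\tilde F(\bfx^{M},z)$ has $\deg_z(P_j)\ge 1$, and since that factorization is nontrivial in alternative~\eqref{item:9}, each $P_j$ satisfies $\deg_z(P_j)<\deg_z(\tilde F)$. This makes the recursion terminate after at most $\deg_z(\tilde F)$ levels and keeps $\Omega^{0}$ and $\Gamma$ finite. (The paper splits into two factors $F_1F_2$ rather than all irreducibles at once, but this is cosmetic.)

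Your ``secondary delicate point'' about collapses is also handled by the same observation. Because every $P_j$ has $\deg_z(P_j)\ge 1$ and the leading $z$-coefficients are prevented from vanishing by enlarging $\Gamma$ with the set $\Delta_{F}$ of Lemma~\ref{specialization}\eqref{item:16}, the specialization $P_j(t^{\bfb},z)$ keeps positive $z$-degree and is never a unit in $\C(t)[z^{\pm1}]$. This is why the paper first proves the cleaner statement Theorem~\ref{thm:1} (irreducible factorization in $\C(t)[z^{\pm1}]$, with no $\ct$-quotients) and only afterwards translates it into Theorem~\ref{thm:10} via Gauss's lemma; in that formulation your collapse concern simply does not arise.
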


Similarly as for Conjecture \ref{Sch0}, for the validity this
statement it is is necessary to take out in its condition
\eqref{item:15} the constant part of $F(t^{\bfa})$ and of the
$P(t^{\bfb})$'s.

\begin{example}
 \label{exm:3}
Set $F=x_{1}+z x_{2}-z-1 \in \C(z)[x_{1}^{\pm1}, x_{2}^{\pm1}]$. Let $\bfa\in \Z^{2}$ and choose $M\in \Z^{2\times2}$ nonsingular and $\bfb\in \Z^{2}$ with $\bfa=M\bfb$. We have that
\begin{displaymath}
F(\bfx^{M})
=x_{1}^{m_{1,1}}x_{2}^{m_{1,2}}+z\, x_{1}^{m_{2,1}}x_{2}^{m_{2,2}}-z-1
\end{displaymath}
is irreducible, and so $P:=F(\bfx^{M})$ is its only irreducible
factor.  Again, $t-1$ divides $F(t^{\bfa})=P(t^{\bfb})$, and so these
univariate Laurent polynomials are not irreducible, unless we divide
them by a suitable constant factor.
\end{example}

\begin{remark}
  \label{rem:7}
  The validity of Schinzel's conjecture \ref{Sch0} would imply that of
  Conjecture \ref{conj:3}, in the same way that Theorem \ref{thm:10}
  implies Theorem \ref{thm:2}, as explained in Section
  \ref{sec:fact-sparse-polyn}.
\end{remark}



\section{A variant of Zannier's toric Bertini's theorem} 
\label{sec:toric-anal-bert}

{ Zannier proved in \cite{Zannier:hiaag} 
an analogue of Bertini's theorem for covers, where the subtori of $\Gm^{n}$ replaced the linear subspaces in the classical version of this theorem.}  This result was precised and generalized (with a completely different proof) by Fuchs,
Mantova and Zannier to include fibers of arbitrary cosets of subtori \cite[Theorem~1.5]{FuchsMantovaZannier:fiptvb} and to obtain a more
uniform result.

As before, let $\bfx=(x_{1},\dots,x_{n})$ be a set of $n$ variables
and denote by $\Gm^{n}=\Spec(\C[\bfx^{\pm1}])$ the $n$-dimensional
torus over $\C$. 

Let $W$ be a variety, that is, a reduced separated scheme of finite
type over $\C$. We assume that $W$ is irreducible and quasiprojective
of dimension $n\ge 0$, and equipped with a dominant (regular) map
\begin{equation*}
  \pi\colon W\longrightarrow \Gm^{n}
\end{equation*}
of degree $e\ge 1$ that is finite onto its image. Given an isogeny
$\lambda$ of $ \Gm^{n}$, that is, an endomorphism of $\Gm^{n}$ with
finite kernel, we denote by $\lambda^{*}W$ the fibered product
$\Gm^{n}\times_{\lambda,\pi} W$, and by
  \begin{equation}\label{eq:8}
    \xymatrix{
\lambda^{*}W \ar[r]^{\lambda} \ar[d]^{\pi}& W \ar[d]^{\pi}\\
\Gm^{n}\ar[r]^{\lambda} & \Gm^{n}}
  \end{equation}
  the corresponding fibered product square.

  \begin{definition}
    \label{def:1}
    The map $\pi$ satisfies the \emph{property PB (pullback)} if, for
    every isogeny $\lambda$ of $\Gm^{n}$, we have that $\lambda^{*}W$
    is an irreducible variety.
  \end{definition}

By \cite[Proposition 2.1]{Zannier:hiaag}, it is enough to test this
condition for $\lambda=[e]$, the multiplication map of $\Gm^{n}$ by
the integer $e=\deg(\pi)$.\\

The aforementioned result by Fuchs, Mantova and Zannier can be stated
as follows. 

\begin{theorem}  
\label{FMZ}
Let $W$ be an irreducible quasiprojective variety of dimension $n$ and
$\pi\colon W\rightarrow \Gm^n$ a dominant map that is finite onto its
image and that satisfies the property PB. There is a finite
union $\cE$ of proper subtori of $\Gm^n$ such that, for every subtorus
$T\subset \Gm^{n}$ not contained in $\cE$ and every point
$p\in \Gm^{n}(\C)= (\C^{\times})^{n}$, we have that
$\pi^{-1}(p\cdot T)$ is an irreducible subvariety of $W$.
\end{theorem}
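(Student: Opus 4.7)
The plan is to reduce to the Galois closure of $\pi$, reformulate irreducibility of $\pi^{-1}(pT)$ as transitivity of a monodromy group on a coset space, and use the property PB to confine the exceptional subtori within a finite union of proper subtori.

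First, I would pass to the Galois closure $\widetilde{\pi}\colon \widetilde{W}\to \Gm^{n}$ of $\pi$, with Galois group $G$ and distinguished subgroup $H\le G$ such that $W$ is recovered as $\widetilde{W}/H$. Applying \cite[Proposition 2.1]{Zannier:hiaag} to $\lambda=[e]$ on $\widetilde{W}$, one checks that PB is inherited by $\widetilde{\pi}$ and translates into the following condition on the pair $(G,H)$: no nontrivial intermediate quotient of $\widetilde{\pi}$ is the pullback of a cover of $\Gm^{n}$ along an isogeny. This excludes the ``cyclotomic-type'' splittings illustrated by $F=z^{2}-x_{1}x_{2}^{2}$ in the introduction.

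Second, for a subtorus $T\subset\Gm^{n}$ and $p\in\Gm^{n}(\C)$, the irreducibility of $\pi^{-1}(pT)$ is equivalent to transitivity on $G/H$ of the monodromy subgroup $M_{p,T}\subset G$, obtained as the image of the fundamental group of a suitable open subset of $pT$. A standard Bertini connectedness argument yields $M_{p,T}=G$, and hence irreducibility, for $T$ and $p$ in suitable Zariski-open loci. The subtlety is that the theorem requires $M_{p,T}=G$ for \emph{every} $p$, once $T$ lies outside a fixed exceptional set.

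The main obstacle is then the third step: producing the finite union $\cE$ of proper subtori such that, for every $T\not\subset\cE$ and every $p$, no monodromy drop occurs. A drop $M_{p,T}\subsetneq G$ yields an intermediate cover of $\Gm^{n}$ which becomes reducible upon restriction to $pT$; by PB this intermediate cover is not the pullback of a cover of $\Gm^{n}$ along an isogeny, so the possible drops are controlled by a finite list of characters of $\Gm^{n}$ read off from the ramification data of $\widetilde{\pi}$. The kernels of these characters are proper subtori of $\Gm^{n}$ and constitute the required $\cE$: a subtorus $T$ not contained in any of them avoids all drops simultaneously. Establishing this uniform finiteness rigorously, for every coset $pT$ and not just the generic one, is a Diophantine statement of toric Hilbert irreducibility type, and is the technical heart of \cite[Theorem~1.5]{FuchsMantovaZannier:fiptvb}.
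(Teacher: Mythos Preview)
The paper does not prove this theorem at all: it is quoted as the Fuchs--Mantova--Zannier result \cite[Theorem~1.5]{FuchsMantovaZannier:fiptvb} and used as a black box in the proof of Theorem~\ref{TBT}. So there is no ``paper's own proof'' to compare your proposal against.

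As for your sketch itself, the first two steps (Galois closure, monodromy reformulation) are reasonable heuristics, but your third step is not a proof: you correctly identify the uniform control of monodromy drops over \emph{all} cosets $pT$ as the crux, and then explicitly defer it to \cite[Theorem~1.5]{FuchsMantovaZannier:fiptvb}, which is the very statement you are supposed to be proving. In other words, your proposal is circular at the decisive point. The genuine content of the Fuchs--Mantova--Zannier argument --- the finiteness of the exceptional subtori uniformly in $p$ --- is precisely what you leave unaddressed, and the vague appeal to ``ramification data'' and ``characters'' does not substitute for it. If you want an actual proof, you would need to reproduce or replace that Diophantine argument; otherwise, the honest move is to do what the paper does and simply cite the result.
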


%

When the property PB is not verified, the conclusion of this theorem
does not necessarily hold because the map $\pi$ factors through a
nontrivial isogeny, as it was already pointed out in
\cite{Zannier:hiaag}.

\begin{example}
\label{exm:4}
Let
$F=z^{2}-x_{1}x_{2}^{2}\in \C[x_{1}^{\pm1}, x_{2}^{\pm1},z^{\pm1}]$,
set $W$ be the torus $V(F)\subset \Gm^{3}$ and consider the isogeny
 \begin{displaymath}
\pi\colon W\longrightarrow \Gm^{2} 
 \end{displaymath}
 defined by $\pi(x_{1},x_{2},z)=(x_{1},x_{2})$ for
 $(x_{1},x_{2},z)\in W$.

The variety $W$ is irreducible and, since
 $F$ is monic in $z$, the map $\pi$ is finite. However, it does not
 satisfy the property PB, since for the isogeny $\lambda$ of
 $ \Gm^{2}$ defined by $\lambda (x_{1},x_{2})= (x_{1}^{2},x_{2})$,
\begin{displaymath}
\lambda^{*}W\simeq V(z-x_{1}x_{2}) \cup V(z+x_{1}x_{2})
\end{displaymath} 
and so this pullback is reducible.

Indeed, this map does neither satisfy the conclusion of
Theorem~\ref{FMZ}: given $(a_{1},a_{2})\in \Z^{2}\setminus \{(0,0)\}$
with $a_{1}$ even, let $T\subset \Gm^{2}$ be the 1-dimensional
subtorus given as the image of the map
$t\mapsto (t^{a_{1}},t^{a_{2}})$. Then
\begin{displaymath}
  \pi^{-1}(T)= V(z-t^{a_{1}/2}t^{a_{2}}) \cup
  V(z+t^{a_{1}/2}t^{a_{2}}), 
\end{displaymath}
and so this fiber is not irreducible.
\end{example}


Here we need a variant of Theorem~\ref{FMZ} that can be also applied
in the situation when the map $\pi$ does not verify the property PB.
In this more general situation, the conclusion of that theorem does
not necessarily hold. {\red However, as already remarked 
in~\cite[Proposition 2.1]{Zannier:hiaag}, we can reduce ourself,  
up to an isogeny, to a situation in which PB is satisfied. We need 
a more explicit statement. The conclusion of Theorem~\ref{FMZ}}
is replaced by an
alternative that ``explains'' the possibility that a fiber is
reducible by its factorization through a reducible pullback of the
variety $W$ by an isogeny of $\Gm^{n}$ within a finite set.

\begin{theorem}
\label{TBT}
Let $W$ be an irreducible quasiprojective variety of dimension $n$ and
$\pi\colon W\rightarrow \Gm^n$ a dominant map that is finite onto its
image. There is a finite union $\cE$ of proper subtori of $\Gm^n$ and
a finite set $\Lambda$ of isogenies of $\Gm^n$ such that, for each
subtorus $T\subset \Gm^{n}$ and each point
$p\in \Gm^{n}(\C)=(\C^{\times})^{n}$, one of the next conditions
holds:
\begin{enumerate}
\item \label{item:23}  $T\subseteq\cE$;
\item \label{item:24} there is $\lambda\in\Lambda$ with $\lambda^{*} W$
  reducible and a subtorus $T'\subset \Gm^{n}$ with $\lambda$ inducing
  an isomorphism $T'\to T$;
\item \label{item:25} $\pi^{-1}(p\cdot T)$ is irreducible. 
\end{enumerate}
\end{theorem}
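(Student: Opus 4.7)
The plan is to reduce to a situation where Zannier's pullback property PB holds, apply the Fuchs--Mantova--Zannier theorem (Theorem~\ref{FMZ}) to the reduced map, and transfer the conclusions back through the reducing isogeny. I first aim to produce an isogeny $\mu\colon\Gm^{n}\to\Gm^{n}$ and a factorization $\pi=\mu\circ\pi_{0}$ with $\pi_{0}\colon W\to\Gm^{n}$ still dominant and finite onto its image, but now satisfying~PB. To this end I would replace $W$ by its normalization, which does not affect the statement. The unit group $\cO(W)^{\times}/\C^{\times}$ is then finitely generated (by the classical theorem of Samuel--Rosenlicht); letting $M=\Z^{n}$ be the character lattice of~$\Gm^{n}$, its image $\pi^{*}(M)$ is a full-rank subgroup whose saturation $M'$ is therefore a free abelian group of rank~$n$ containing $\pi^{*}(M)$ with finite index. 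Identifying $\Spec\C[M']\simeq\Gm^{n}$, the inclusion $\pi^{*}(M)\subset M'$ yields the isogeny $\mu$ together with the map $\pi_{0}\colon W\to\Gm^{n}$; maximality of $M'$ (being saturated) precisely prevents $\pi_{0}$ from factoring further through any non-trivial isogeny, which by \cite[Proposition~2.1]{Zannier:hiaag} is equivalent to~PB.

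I would next apply Theorem~\ref{FMZ} to $\pi_{0}$ to obtain a finite union $\cE_{0}$ of proper subtori of $\Gm^{n}$ controlling reducibility of the preimages $\pi_{0}^{-1}(q\cdot S)$. Set $\cE:=\mu(\cE_{0})$, which is again a finite union of proper subtori since isogenies preserve the dimension of subtori. For $\Lambda$, I would take representatives of the (finitely many) isogenies $\lambda\colon\Gm^{n}\to\Gm^{n}$ whose cocharacter image satisfies $\mu_{*}(N)\subseteq\lambda_{*}(N)\subsetneq N$, where $N=\Z^{n}$ is the cocharacter lattice. Finiteness of $\Lambda$ follows from that of $N/\mu_{*}(N)$. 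For each such~$\lambda$ the map $\pi$ factors through~$\lambda$, and a direct fiber-product computation gives $\lambda^{*}W\simeq\bigsqcup_{\xi\in\ker\lambda}W$, which is reducible since $\deg\lambda>1$.

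The verification of the trichotomy for a given subtorus $T\subset\Gm^{n}$ (with cocharacter lattice $N_{T}$) and a point $p\in\Gm^{n}(\C)$ then goes as follows. Let $T'':=\mu^{-1}(T)^{\circ}$ and $L_{T}:=N_{T}+\mu_{*}(N)$. Using $\pi=\mu\circ\pi_{0}$, one rewrites $\pi^{-1}(p\cdot T)=\pi_{0}^{-1}(\mu^{-1}(p\cdot T))$, and $\mu^{-1}(p\cdot T)$ decomposes as a disjoint union of translates of $T''$ indexed by $\ker\mu/(\ker\mu\cap T'')$. If $T''\subseteq\cE_{0}$ then $T=\mu(T'')\subseteq\cE$, giving alternative~\eqref{item:23}. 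Otherwise, Theorem~\ref{FMZ} makes each piece $\pi_{0}^{-1}(q_{l}\cdot T'')$ irreducible, and the number of components of $\pi^{-1}(p\cdot T)$ equals $|\ker\mu/(\ker\mu\cap T'')|$; by standard duality applied to the induced isogeny $\Gm^{n}/T''\to\Gm^{n}/T$, this number coincides with the index $|N/L_{T}|$. If $L_{T}=N$ the preimage is irreducible, giving~\eqref{item:25}; otherwise, the isogeny $\lambda\in\Lambda$ defined by $\lambda_{*}(N)=L_{T}$ satisfies $N_{T}\subset\lambda_{*}(N)$, so $T$ lifts to a subtorus $T'\subset\Gm^{n}$ on which $\lambda$ restricts to an isomorphism, giving~\eqref{item:24}.

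The main obstacle will be the construction of the maximal isogeny~$\mu$ in the first step: one must argue that the saturated lattice $M'$ actually yields a regular morphism $\pi_{0}$ on all of~$W$ (which is where the normalization reduction enters), and that the saturation condition translates correctly into the PB property of $\pi_{0}$ via~\cite[Proposition~2.1]{Zannier:hiaag}. The duality identification $|\ker\mu/(\ker\mu\cap T'')|=|N/L_{T}|$ is a further delicate point, but should follow by diagram-chasing on the induced isogeny $\Gm^{n}/T''\to\Gm^{n}/T$ and relating the size of its kernel to the cokernel of the corresponding map on cocharacter lattices.
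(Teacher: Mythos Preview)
Your proof is correct and follows the same overall strategy as the paper: factor $\pi$ through an isogeny $\mu$ so that the remaining map has property~PB, apply Theorem~\ref{FMZ} to that map, take $\cE$ to be the image under~$\mu$ of the resulting exceptional set, and take $\Lambda$ to be the finite set of intermediate isogenies between~$\mu$ and the identity. The one substantive difference is in how you produce the PB factorization. The paper (Corollary~\ref{cor:2}) simply chooses $\rho\colon W\to\Gm^{n}$ of \emph{minimal degree} among all factorizations $\pi=\lambda\circ\rho$ with $\lambda$ an isogeny, and then invokes Proposition~\ref{red} to see that such a minimal $\rho$ must have~PB; this works directly on~$W$ with no extra hypotheses. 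Your route via the saturation of $\pi^{*}(M)$ inside $\cO(W)^{\times}/\C^{\times}$ is more explicit and identifies the factorization canonically, but it costs the normalization reduction and an appeal to Rosenlicht's unit theorem. The remainder of your argument---indexing $\Lambda$ and the case analysis by intermediate sublattices $\mu_{*}(N)\subseteq L\subsetneq N$ of the cocharacter lattice, and the count $|\ker\mu/(\ker\mu\cap T'')|=|N/L_{T}|$---is exactly the dual formulation of the paper's analysis via subgroups $H\subset\ker(\mu)$, Lemma~\ref{split}, and the easy direction of Proposition~\ref{red}.
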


\begin{remark}
  \label{rem:1}
  When the condition \eqref{item:24} above is satisfied, there is a
  diagram
  \begin{displaymath}
    \xymatrix{
\pi^{-1} (T')\ar[r] \ar[d]&
\lambda^{*}W\ar[r]^{\lambda} \ar[d]^{\pi}& W \ar[d]^{\pi}\\
T'\ar[r]^{\iota} & \Gm^{n}\ar[r]^{\lambda} & \Gm^{n}}
  \end{displaymath}
  with $\lambda^{*}W$ reducible and $\lambda \colon T'\to T$ an
  isomorphism, and where $\iota$ denotes the inclusion of the subtorus
  $T'$ into $\Gm^{n}$.

  Both inner squares in this diagram are fibered products, and so is
  the outer square. This implies that the fibers $ \pi^{-1}(T)$ and
  $ \pi^{-1}(T')$ are isomorphic. Thus $\pi^{-1}(T)$ can be identified
  with the fiber of a subtorus for the \emph{reducible} cover $\pi\colon\lambda^{*}W \to \Gm^{n}$, 
  and so this fiber is expected to be
  reducible as well.
\end{remark}

\begin{example}
  \label{exm:1}
  We keep the notation from Example \ref{exm:4}. In particular,
  $F=z^{2}-x_{1}x_{2}^{2}\in \C[x_{1}^{\pm1}, x_{2}^{\pm1},z^{\pm1}]$,
  $W$ the torus $V(F)\subset \Gm^{3}$, and
  $\pi\colon W\rightarrow \Gm^{2}$ the isogeny defined by
  $\pi(x_{1},x_{2},z)=(x_{1},x_{2})$.

  Let $(a_{1},a_{2})\in \Z^{2}\setminus \{(0,0)\}$ with $a_{1}$ even,
  and set $T\subset \Gm^{2}$ for the 1-dimensional
  subtorus given as the image of the map
  $t\mapsto (t^{a_{1}},t^{a_{2}})$.  These vectors satisfy the
  condition \eqref{item:24} in Theorem \ref{TBT} for the isogeny
  $\lambda\colon \Gm^{2}\to \Gm^{2}$ defined by
  \begin{displaymath}
    \lambda (x_{1},x_{2})=
    (x_{1}^{2},x_{2}).
  \end{displaymath}
  Indeed,  $\lambda^{*}W$ is reducible, and this isogeny
  induces an isomorphism $T'\to T$ with
  the subtorus $T'\subset \Gm^{2}$ given as the image of
  the map $t\mapsto (t^{a_{1}/2},t^{a_{2}})$.
\end{example}

We prove this theorem by reducing it to the previous toric Bertini's theorem, through a variation (Proposition \ref{red}) of a factorization result for rational maps from \cite{Zannier:hiaag}.
We give the proof 
after some auxiliary results.  We
first study the reducibility of pullbacks of varieties with respect
to isogenies of tori.

\begin{lemma} \label{lemm:2} Let $\pi\colon W\to X$ be a map of
  varieties and $\lambda\colon X\to X$ an \'etale map. Then
  $ X\times_{\lambda,\pi } W$ is a variety.

  In particular, for a map $\pi\colon W\to \Gm^{n}$ and an
  isogeny $\lambda $ of $\Gm^{n}$, we have that $\lambda^{*}W$ is a variety.
\end{lemma}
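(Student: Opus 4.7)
The plan is to unpack the paper's definition: a variety here is a reduced, separated scheme of finite type over $\C$. The fiber product $X\times_{\lambda,\pi}W$ exists as a $\C$-scheme, and both ``separated'' and ``finite type'' are preserved by arbitrary base change, so those two conditions are automatic. The only content of the lemma is that $X\times_{\lambda,\pi}W$ is reduced.

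For reducedness I would appeal to the standard fact that étaleness is stable under base change. The second projection
\begin{displaymath}
p\colon X\times_{\lambda,\pi}W\longrightarrow W
\end{displaymath}
is the pullback of $\lambda\colon X\to X$ along $\pi\colon W\to X$, and hence is étale. Since an étale morphism is in particular smooth of relative dimension zero, and smooth morphisms to a reduced base have reduced total space (equivalently: étale morphisms are regular, and regular morphisms pull back reducedness), the hypothesis that $W$ is reduced forces $X\times_{\lambda,\pi}W$ to be reduced. I would quote this explicitly from EGA~IV or the Stacks Project rather than reprove it.

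For the ``in particular'' statement I would just observe that any isogeny $\lambda$ of $\Gm^{n}$ is étale in characteristic zero. Indeed, $\lambda$ is a finite surjective homomorphism of smooth connected group schemes, and its differential at the identity is the invertible linear map associated to the integer matrix defining $\lambda$; translating by the group law shows $\lambda$ is smooth of relative dimension $0$, and combined with finiteness this gives étaleness. Hence the general statement applies to $\lambda^{*}W=\Gm^{n}\times_{\lambda,\pi}W$.

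There is no real obstacle here: the content is purely formal, resting on the stability of étaleness and reducedness under base change. The only thing that deserves emphasis in the written proof is why the two notions ``isogeny of $\Gm^{n}$'' and ``étale self-map of $\Gm^{n}$'' agree over $\C$, so that the first (more elementary) assertion really does entail the second.
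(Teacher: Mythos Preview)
Your proof is correct and follows the same high-level strategy as the paper: base-change the \'etale map $\lambda$ along $\pi$ to obtain an \'etale morphism $X\times_{\lambda,\pi}W\to W$, then use that \'etaleness over a reduced base forces the source to be reduced. The difference lies only in how that last implication is justified. You invoke it as a single black-box fact from EGA~IV or the Stacks Project (smooth/\'etale morphisms pull back reducedness). The paper instead unpacks it by hand: \'etaleness gives an isomorphism of completed local rings $\wh\cO_{p}\simeq\wh\cO_{q}$; since $W$ is a variety, $\cO_{p}$ is reduced, and Chevalley's theorem then says $\wh\cO_{p}$ is reduced; transporting across the isomorphism and using the injection $\cO_{q}\hookrightarrow\wh\cO_{q}$ gives that $\cO_{q}$ is reduced. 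Your route is shorter and perfectly adequate; the paper's is more self-contained and makes the mechanism visible without appealing to a general structure theorem on smooth morphisms. For the ``in particular'' clause both proofs say the same thing, namely that isogenies of algebraic groups over $\C$ are \'etale.
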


\begin{proof}
  Since  $\lambda\colon X\to X$ is  \'etale, the map 
  \begin{equation}\label{eq:7}
\lambda\colon X\times_{\lambda,\pi } W \longrightarrow W    
  \end{equation}
  is also \'etale, because of the invariance of this property under
  base change \cite[Chapter IV, Proposition 10.1(b)]{Hartshorne:ag}.
  By \cite[Chapter IV, Exercise 10.4]{Hartshorne:ag}, this implies
  that, for every closed point $q\in X\times_{\lambda,\pi } W$ and
  $p:=\lambda (q) \in W$, the induced map of completed local rings
  \begin{equation}
    \label{eq:3}
  \wh\cO_{p} \longrightarrow \wh\cO_{q}  
  \end{equation}
  is an isomorphism. Since $W$ is a variety, the local ring $\cO_{p}$
  is reduced and, by a theorem of Chevalley
  \cite[\S8.13]{ZariskiSamuel:caII}, the completion $ \wh\cO_{p}$ is
  reduced too.

  By the isomorphism in \eqref{eq:3}, the completed ring $ \wh\cO_{q}$
  is reduced. Since this is the completion of a ring with respect to a
  maximal ideal, the map $\cO_{q}\to \wh\cO_{q}$ is injective, and so
  the local ring $\cO_{q}$ is also reduced. Since the condition of
  being reduced is local, this implies that $X\times_{\lambda,\pi } W$
  is a variety.

  The last statement comes from the fact that the isogenies of
  algebraic groups over $\C$ are \'etale maps.
\end{proof}

Thanks to this result,  $\lambda^{*}W$ can be identified
with its underlying algebraic subset in the Cartesian product
$\Gm^{n}(\C)\times W(\C)$, namely
\begin{equation}\label{eq:5}
\lambda^{*}W= \{(p, w)\in \Gm^{n}(\C)\times W(\C) \mid \lambda(p)= \pi(w)\}. 
\end{equation}
Hence, $\lambda^{*}W$ is irreducible if and only if this algebraic
subset is irreducible. In particular, the map $\pi$ satisfies the
property PB if and only if for every isogeny $\lambda$ of $\Gm^{n}$,
the pullback $\lambda^{*}W$ has a single irreducible component.  

The following proposition is implicit in the proof of \cite[Proposition
2.1]{Zannier:hiaag}.

\begin{proposition}
\label{red}
Let $\pi\colon W\rightarrow \Gm^n$ be a map from an irreducible
variety $W$, and $\lambda$ an isogeny of $\Gm^{n}$. The following
conditions are equivalent:
\begin{enumerate}
\item \label{item:11}  the pullback $\lambda^{*}W$
is reducible;
\item \label{item:12} there is a factorization
  $\lambda = \mu\circ \tau$ with $\mu, \tau$ isogenies of $\Gm^{n}$
  such that $\mu$ is not an isomorphism, and a map
  $\rho\colon W\to \Gm^{n}$ such that $\pi=\mu\circ\rho$.
\end{enumerate}
\end{proposition}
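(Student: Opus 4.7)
The plan is to treat the two implications separately; the converse direction is the substantive one, and the main obstacle there is establishing regularity of the resulting morphism $\rho$.

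For $(2)\Rightarrow(1)$, I would use the set-theoretic description \eqref{eq:5} of the pullback. Given $\lambda=\mu\circ\tau$ and $\pi=\mu\circ\rho$, the defining condition $\lambda(p)=\pi(w)$ rewrites as $\tau(p)\cdot\rho(w)^{-1}\in\ker(\mu)$. For each $\zeta\in\ker(\mu)$, set
\begin{displaymath}
Z_{\zeta}=\{(p,w)\in\Gm^{n}\times W:\tau(p)=\zeta\cdot\rho(w)\},
\end{displaymath}
a closed subset of $\lambda^{*}W$. The $Z_{\zeta}$ are pairwise disjoint (as $\rho(w)$ is invertible, $\zeta$ is uniquely determined by $(p,w)$), each is nonempty and of dimension $\dim W$ (the projection $Z_{\zeta}\to W$ is a $\ker(\tau)$-torsor under the surjective isogeny $\tau$), and their union is $\lambda^{*}W$. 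Since $\mu$ is not an isomorphism, $|\ker(\mu)|\ge 2$, so $\lambda^{*}W$ contains at least two disjoint top-dimensional closed subvarieties, hence is reducible.

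For $(1)\Rightarrow(2)$, I would pass to function fields. Let $K=k(\Gm^{n})$ and $L=k(W)$; the dominant map $\pi$, finite onto its image, gives a finite extension $K\hookrightarrow L$, while $\lambda$ gives a finite Galois extension $K\hookrightarrow K'$ with Galois group $\Gamma\cong\ker(\lambda)$, since $\C$ is algebraically closed of characteristic zero. Fix an algebraic closure $\overline{K}$ containing both. The function ring $L\otimes_{K}K'$ of $\lambda^{*}W$ at the generic point of $\Gm^{n}$ is finite étale over $K$, and its decomposition as a product of fields is in bijection with the irreducible components of $\lambda^{*}W$. Hence reducibility is equivalent to $F:=L\cap K'\supsetneq K$ inside $\overline{K}$. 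By Galois correspondence, $F=K'^{H}$ for some proper subgroup $H\subsetneq\Gamma$, which corresponds to a factorization $\lambda=\mu\circ\tau$ with $\tau\colon\Gm^{n}\to T:=\Gm^{n}/H$ and $\mu\colon T\to\Gm^{n}$, where $T\cong\Gm^{n}$ and $\mu$ is not an isomorphism because $H\ne\Gamma$.

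It remains to upgrade the field inclusion $F\hookrightarrow L$ to a regular morphism $\rho\colon W\to T$ satisfying $\pi=\mu\circ\rho$; this is the main obstacle, since $W$ is not assumed normal and a rational map to $T$ need not extend directly. I would handle it via the fibered product $\mu^{*}W=W\times_{\pi,\Gm^{n},\mu}T$, a variety by Lemma~\ref{lemm:2}. Because $F\subseteq L$, the minimal polynomial over $K$ of a primitive element of $F$ acquires a root in $L$, so $L\otimes_{K}F$ splits off $L$ as a direct factor, producing an irreducible component $W_{0}\subseteq\mu^{*}W$ whose base-changed projection $W_{0}\to W$ is birational. This projection is finite étale as a base change of $\mu$, and a finite étale birational morphism of irreducible varieties has constant fibre degree equal to one, hence is an isomorphism; composing its inverse with the other projection $\mu^{*}W\to T$ yields the required $\rho$.
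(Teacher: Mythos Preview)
Your argument for $(2)\Rightarrow(1)$ coincides with the paper's: both decompose $\lambda^{*}W$ into the disjoint pieces $Z_{\zeta}=\Gm^{n}\times_{\tau,\,\zeta\cdot\rho}W$ indexed by $\zeta\in\ker(\mu)$.

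For $(1)\Rightarrow(2)$ you take a genuinely different route. The paper argues geometrically: $\ker(\lambda)$ acts on $\lambda^{*}W$ permuting its irreducible components $U_{1},\dots,U_{k}$ transitively; the stabilizer $H\subsetneq\ker(\lambda)$ of $U_{1}$ yields the factorization $\lambda=\mu\circ\tau$ via the tower $\Gm^{n}\to\Gm^{n}/H\to\Gm^{n}/\ker(\lambda)$, and $\rho$ comes from the isomorphism $U_{1}/H\simeq W$. Your Galois-theoretic approach on function fields pins down the same subgroup as $H=\Gal(K'/L\cap K')$, which is conceptually clean, but it requires $\pi$ to be dominant so that $K\hookrightarrow L$ even makes sense. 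That hypothesis sits in the paper's ambient setup but is not part of the proposition's statement, and the paper's group-action argument does not use it. (The further hypothesis ``finite onto its image'' that you invoke is unnecessary: since $K'/K$ is finite, $L\otimes_{K}K'$ is finite \'etale over $L$ regardless of $[L:K]$.)

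On the regularity of $\rho$, the two arguments converge: each exhibits an irreducible component of a fiber product over $W$ that maps isomorphically to $W$. Your phrase ``finite \'etale as a base change of $\mu$'' is a slip, though --- it is $\mu^{*}W\to W$ that is the base change of $\mu$, whereas $W_{0}$ is only one irreducible component of $\mu^{*}W$. For $W_{0}\to W$ to inherit \'etaleness you need $W_{0}$ to be \emph{open} in $\mu^{*}W$, i.e.\ the irreducible components to be pairwise disjoint. The paper asserts the analogous disjointness for the $U_{i}$'s in $\lambda^{*}W$ as a consequence of \'etaleness of the projection to $W$; you are relying on the same fact and should make that step explicit rather than folding it into ``base change''.
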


In other terms, the condition \eqref{item:12} in the proposition above
amounts to the existence of the commutative diagram extending
\eqref{eq:8} of the form
  \begin{displaymath}
    \xymatrix{
\lambda^{*}W\ar[rr]^{\lambda} \ar[d]^{\pi}& &W \ar[d]_{\pi}\ar@/^4pc/[ddl]^{\rho}\\
 \Gm^{n}\ar[rr]^{\lambda} \ar[rd]_{\tau} & &\Gm^{n} \\
&\Gm^{n} \ar[ru]_{\mu}&
}
\end{displaymath}

\begin{proof}
  Suppose that the condition \eqref{item:12} holds. In this case, for
  $p\in \Gm^{n}(\C)$ and $w\in W(\C)$, the fact  that $ \lambda(p)=\pi(w)$
  is equivalent to $ \mu(\tau(p))=\mu(\rho(w))$, and so this holds if
  and only if there is $\zeta\in\ker(\mu)$ with
  $\tau (p)=\zeta\cdot\rho(w)$. From \eqref{eq:5}, the pullback
  decomposes into disjoints subvarieties as
  \begin{equation*}
\lambda^{*}W= \bigcup_{\zeta\in\ker(\mu)}
\Gm^{n}\times_{\tau,\zeta\cdot\rho} W. 
  \end{equation*}
  Since $\mu$ is not an isomorphism, this pullback is reducible,
  giving the condition \eqref{item:11}.

  Conversely, suppose that the condition \eqref{item:11} holds. Then
  $\lambda^{*}W$ has a decomposition into
  irreducible components
\begin{displaymath}
  \lambda^{*}W=\bigcup_{i=1}^{k}U_{i}
\end{displaymath}
with $k\ge 2$.  Similarly as in \eqref{eq:7}, the map
$ \lambda^{*}W\to W$ is \'etale, and so the $U_{i}$'s are
disjoint. Since $\lambda$ is an isogeny, the map $ \lambda^{*}W\to W$
is also finite.

The finite subgroup $\ker(\lambda)$ of $\Gm^{n}(\C)$ acts on
$\lambda^{*}W$ \emph{via} the maps $ (p,w)\mapsto (\zeta \cdot p,w) $
for $\zeta\in \ker(\lambda)$, and this action respects the fibers of
$\lambda$. The action is transitive on the fibers, and so it is also
transitive on the $U_{i}$'s.

Let $H\subset \ker(\lambda)$ be the stabilizer of the irreducible
component $U_{1}$, and $U_{1}/H$ the quotient variety.  We have that
$H$ acts on $U_{1}$ transitively on the fibers and without fixed
points. The induced map
\begin{displaymath}
U_{1}/H\longrightarrow W  
\end{displaymath}
is a finite \'etale map of degree 1, and so it is an isomorphism
\cite[\S III.10, Proposition 2]{Mumford:rbvs}.

Then we define the map $\rho\colon W\to \Gm^{n}$ as the map obtained
from the quotient map $U_{1}/H\to \Gm^{n}/H$ and the identifications
$U_{1}/H \simeq W$ and $\Gm^{n}/H\simeq \Gm^{n}$.  In concrete terms
and identifying $\Gm^{n}/H \simeq \Gm^{n}$, this map is defined, for
$w\in W$, as $\rho(w)= \tau(p\cdot H)$ for any $p\in \Gm^{n}$ such that
$(p,w)\in U_{1}$.

Both $\Gm^{n}/H$ and $ \Gm^{n}/\ker(\lambda )$ are
isomorphic to $\Gm^{n}$, and so there is a factorization
\begin{displaymath}
\lambda=\mu \circ \tau,   
\end{displaymath}
with $\tau$ and $\mu$ corresponding to the projections
$\Gm^{n}\to \Gm^{n}/H$ and $\Gm^{n}/H\to \Gm^{n}/\ker(\lambda)$,
respectively. For $w\in W$ and $(p,w)\in U_{1}$, we have that
$\mu\circ \rho (w)= \mu\circ \tau (p)= \pi(w)$. Since the action of
$\ker(\lambda)$ on the $U_{i}$'s is transitive and $k\ge 2$, we have
that $H\ne \ker(\lambda)$ and so $\mu$ is not an isomorphism, giving
the condition \eqref{item:12}.
\end{proof}

\begin{remark} 
\label{rem:6} 
By this proof, if $\lambda^{*}W$ is reducible, then the number of its
irreducible components is equal to the maximum of the quantity
$\deg(\mu)=[\ker(\lambda):H] $ over all possible maps $\rho$ as in the
condition \eqref{item:12}.
\end{remark}

The next result allows to factorize the dominant map
$\pi\colon W\to \Gm^{n}$ as a map satisfying the property PB followed
by an isogeny. It is a variant of
\cite[Proposition~2.1]{Zannier:hiaag}, that states a similar property
for dominant \emph{rational} maps.

\begin{corollary}
  \label{cor:2}
  Let $W$ be an irreducible variety of dimension $n$ and
  $\pi \colon W\to \Gm^{n}$ a dominant map. There are a map $\rho\colon W\to \Gm^{n}$
  satisfying the property PB  and an isogeny
  $\lambda$ of $\Gm^{n}$  with $\pi=\lambda\circ \rho$.
\end{corollary}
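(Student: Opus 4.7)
The plan is to induct on the generic degree $d = [\C(W) : \pi^{*}\C(\Gm^{n})]$ of the dominant map $\pi$, using Proposition \ref{red} as the engine of descent. Each descent step will strictly decrease $d$, so the process must eventually reach a map satisfying PB, which is exactly the output we want.

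For the base case $d = 1$: any factorization $\pi = \mu\circ \rho_{0}$ with $\mu$ an isogeny of $\Gm^{n}$ would force $\deg(\mu) = 1$, that is, $\mu$ is an isomorphism. By the nontrivial direction of Proposition \ref{red}, this rules out $\lambda^{*}W$ being reducible for any isogeny $\lambda$, so $\pi$ already satisfies PB and we may take $\lambda = \id_{\Gm^{n}}$, $\rho = \pi$.

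For the inductive step with $d \geq 2$: if $\pi$ happens to satisfy PB, we are again done with $\lambda = \id_{\Gm^{n}}$, $\rho = \pi$. Otherwise I would pick an isogeny $\lambda_{0}$ for which $\lambda_{0}^{*}W$ is reducible, and apply Proposition \ref{red} to obtain a factorization $\pi = \mu\circ \rho_{0}$ with $\rho_{0}\colon W\to \Gm^{n}$ a map and $\mu$ a nontrivial isogeny (so $\deg(\mu)\geq 2$). The key check before invoking the inductive hypothesis is that $\rho_{0}$ is still dominant: since $\pi$ is dominant and $\mu$ is a finite surjection, $\mu\bigl(\overline{\rho_{0}(W)}\bigr) = \overline{\pi(W)} = \Gm^{n}$, and $\mu$ preserves dimension, forcing $\overline{\rho_{0}(W)} = \Gm^{n}$. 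Multiplicativity of degree under composition then gives $\deg(\rho_{0}) = d/\deg(\mu) < d$, so the inductive hypothesis applies to $\rho_{0}$ and produces a factorization $\rho_{0} = \lambda_{1}\circ \rho$ with $\rho$ satisfying PB and $\lambda_{1}$ an isogeny of $\Gm^{n}$. Setting $\lambda := \mu\circ \lambda_{1}$, which is again an isogeny, yields $\pi = \lambda \circ \rho$ as required.

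The only step requiring real attention is the dominance of the intermediate map $\rho_{0}$, which is what simultaneously legitimizes the inductive call and forces the strict degree decrease ensuring termination; given this, the rest is a direct bookkeeping argument on top of Proposition \ref{red} and the multiplicativity of field-extension degrees.
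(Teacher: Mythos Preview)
Your proof is correct and follows essentially the same strategy as the paper: both arguments descend along the generic degree using Proposition~\ref{red}, the paper via a minimality argument and you via the equivalent induction. Your explicit verification that the intermediate map $\rho_{0}$ is dominant is a nice touch that the paper leaves implicit.
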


\begin{proof}
  Choose $\rho$ as a map $W\to \Gm^{n}$ of minimal degree among
  those that give a factorization of the form $\pi=\lambda \circ \rho$
  with $\lambda$ an isogeny of $\Gm^{n}$.

  Suppose that there is a further isogeny $\nu$ such that
  $\nu^{*}W=\Gm^{n}\times _{\nu,\rho} W$ is reducible. By Proposition
  \ref{red}, there would be an isogeny $\mu$ that is not an
  isomorphism and a map $\rho'\colon W\to \Gm^{n}$ with
  $\rho=\mu\circ \rho'$. Hence
  \begin{displaymath}
\pi= \lambda \circ \rho= (\lambda \circ\mu )\circ \rho' \and
\deg(\rho)=\# \ker(\mu) \cdot \deg(\rho')>\deg(\rho').
  \end{displaymath}
  By construction, this is not possible. Hence  $\nu^{*}W$
  is irreducible for every isogeny $\nu$ of $\Gm^{n}$, and so $\rho$
  satisfies the property PB.
\end{proof}



The next result gives a criterion to detect if the inclusion of a
subtorus can be factored through a given isogeny as in Proposition \ref{red}\eqref{item:12}.

\begin{lemma}
\label{split}
Let $T\subset \Gm^{n}$ be a subtorus and $\lambda$ an isogeny of
$\Gm^{n}$. The following conditions are equivalent:
\begin{enumerate}
\item \label{item:6} there is a subtorus $T'\subset \Gm^{n}$ such that
$\lambda$ induces an isomorphism $T'\to T$;
\item \label{item:7} $\lambda^{-1}(T)$ is the union of $\deg(\lambda)$
  distinct torsion cosets.
\end{enumerate}
\end{lemma}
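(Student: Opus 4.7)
The plan is to analyze the algebraic subgroup $H := \lambda^{-1}(T) \subset \Gm^n$, to single out its identity component $H^\circ$ as the only candidate for $T'$, and to reduce both implications to one degree count. A preliminary observation is that every closed subgroup of $\Gm^n$ splits as the product of a subtorus and a finite subgroup: this follows by character-lattice duality, since the character group $X(H)$ is a finitely generated abelian group, hence of the form $\Z^d \oplus A$ with $A$ finite, so dually $H \simeq \Gm^d \times \widehat{A}$. Consequently, every irreducible component of $H$ is automatically a torsion coset of $H^\circ$, and the real content of \eqref{item:7} is the cardinality of the component group $H/H^\circ$.

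Next I would establish the identity $H = H^\circ\cdot \ker(\lambda)$. Since $\lambda$ is étale, the restriction $\lambda|_H\colon H\to T$ is surjective with finite fibres of size $\deg(\lambda)$, so $\dim H^\circ = \dim H = \dim T$. The restriction $\lambda|_{H^\circ}\colon H^\circ \to T$ is then a homomorphism of tori of equal dimension with finite kernel; by connectedness of $T$ its image is all of $T$, so it is an isogeny of degree $\#(\ker(\lambda)\cap H^\circ)$. Using surjectivity of $\lambda|_{H^\circ}$, any $x\in H$ can be written as $y\cdot k$ with $y\in H^\circ$ and $k\in \ker(\lambda)$, proving the identity. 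Counting cosets gives that the number of irreducible components of $H$ equals
\[
\frac{\#\ker(\lambda)}{\#(\ker(\lambda)\cap H^\circ)} = \frac{\deg(\lambda)}{\deg(\lambda|_{H^\circ})}.
\]

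Both implications are then immediate from this formula. For \eqref{item:7} $\Rightarrow$ \eqref{item:6}, the hypothesis forces $\deg(\lambda|_{H^\circ}) = 1$, so $T' := H^\circ$ is a subtorus on which $\lambda$ restricts to an isomorphism onto $T$. For \eqref{item:6} $\Rightarrow$ \eqref{item:7}, any such $T'$ is a connected subgroup of $H$, hence contained in $H^\circ$; from $\dim T' = \dim T = \dim H^\circ$ we deduce $T' = H^\circ$, and the formula produces exactly $\deg(\lambda)$ components, all of them torsion cosets by the preliminary observation. The only nonroutine input is the splitting of closed subgroups of $\Gm^n$, which is classical; the rest is a straightforward dimension and degree count.
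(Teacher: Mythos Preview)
Your proof is correct and follows essentially the same approach as the paper: both identify the candidate $T'$ with the identity component of $H=\lambda^{-1}(T)$ and reduce the equivalence to the count $[H:H^{\circ}]=\#\ker(\lambda)/\#(\ker(\lambda)\cap H^{\circ})$. The paper's version is terser—in particular, for \eqref{item:7}$\Rightarrow$\eqref{item:6} it simply asserts $T'\cap\ker(\lambda)=\{1\}$, and for \eqref{item:6}$\Rightarrow$\eqref{item:7} it writes $\lambda^{-1}(T)=\ker(\lambda)\cdot T'$ directly without first showing $T'=H^{\circ}$—whereas you make the counting formula and the identity $H=H^{\circ}\cdot\ker(\lambda)$ explicit; your appeal to the structure theorem for closed subgroups of $\Gm^{n}$ is correct but not strictly needed, since once $H=H^{\circ}\cdot\ker(\lambda)$ is established the components are visibly torsion cosets because $\ker(\lambda)$ consists of torsion points.
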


\begin{proof}
  First suppose that $\lambda^{-1}(T)$ is the union of
  $\deg(\lambda)=\#\ker(\lambda)$ distinct torsion cosets, and denote
  by $T'$ the one that contains the neutral element. Then $T'$ is a
  subtorus and $T'\cap \ker(\lambda)=\{1\}$. It follows that
  $\lambda|_{T'} \colon T'\to T$ is an isogeny of degree 1 and hence an
  isomorphism, giving the first condition.

  Conversely, let  $T'\subset \Gm^{n}$ be a subtorus such that
  $\lambda|_{T'} \colon T'\to T$ is an isomorphism. Then 
  \begin{displaymath}
    \lambda^{-1}(T)= \ker(\lambda) \cdot T'.
  \end{displaymath}
  Since $T'\cap \ker(\lambda)=\{1\}$, this fiber is the union of
  $\#\ker(\lambda) = \deg(\lambda)$ distinct torsion cosets, giving
  the second condition.
\end{proof}

\begin{proof}[Proof of Theorem \ref{TBT}]
  By Corollary \ref{cor:2}, there are a map $\rho\colon W \to \Gm^{n}$
  satisfying the property PB and an isogeny $\lambda$ of $\Gm^{n}$
  with $\pi=\lambda\circ\rho$.  

  For each subgroup $H$ of $\ker(\lambda)$,  both
  $\Gm^{n}/H$ and $ \Gm^{n}/\ker(\lambda )$ are isomorphic to
  $\Gm^{n}$, and we consider then a factorization
\begin{equation} \label{eq:13}
\lambda=\mu_{H} \circ \tau_{H}   
\end{equation}
with $\tau_{H}$ and $\mu_{H}$ corresponding to the projections
$\Gm^{n}\to \Gm^{n}/H$ and $\Gm^{n}/H\to \Gm^{n}/\ker(\lambda)$,
respectively.  We set $\Lambda$ as the finite set of isogenies of
$\Gm^{n}$ of the form $\mu_{H}$ as above, for a proper subgroup $H$ of
$\ker(\lambda)$.

Since $\rho\colon W\rightarrow \Gm^{n}$ satisfies the property PB, by
\cite[Theorem 1.5]{FuchsMantovaZannier:fiptvb} there is a finite union
$\cE'$ of proper subtori of $\Gm^{n}$ such that, for every subtorus
$T$ of $\Gm^{n}$ not contained in $\cE'$ and every point
$p\in \Gm^{n}(\C)$, the fiber $\rho^{-1}(p\cdot T)$ is
irreducible. Set $\cE=\lambda(\cE')$.

We next show that the pair $(\Lambda, \cE)$ satisfies the requirements
of Theorem~\ref{TBT}.  Let $T$ be a subtorus of $\Gm^{n}$ that is not
contained in $\cE$ and write $\lambda^{-1}(T)=\bigcup_{i=1}^{k} T_{i}$
as a disjoint union of torsion cosets $T_{i}$ of $\Gm^{n}$.

When $k=1$, we have that $\lambda^{-1}(T)=T_1$
is a subtorus of $\Gm^{n}$ that is not contained in $\cE$. Hence,
$\pi^{-1}(T)=\rho^{-1}(T_1)$ is irreducible.

Otherwise, $k\ge 2$. Let $H \subset \ker(\lambda)$ be the stabilizer
of the (unique) subtori in this decomposition, say $T_{1}$. This is a
proper subgroup, because $\ker(\lambda)$ acts transitively on this
collection of torsion cosets and $k\ge 2$.

Consider the factorization $\lambda=\mu_H\circ\tau_H$ as in
\eqref{eq:13}. Then $\mu_H\in\Lambda$ and $\mu_H^{-1}(T)$ splits as an
union of $k=[\ker(\lambda):H] = \deg(\mu_{H})$ distinct torsion
cosets.  By Lemma \ref{split}, $\mu_{H}$ induces an isomorphism
between a subtorus $T'$ of $\Gm^{n}$ and $T$. Moreover,
Proposition~\ref{red}\eqref{item:12} applied to the map
$\tau_{H}\circ \rho$ and the isogeny $\mu_{H}$ shows that the pullback
$\mu_{H}^{*}W$ is reducible, completing the proof.
\end{proof}

It seems interesting to extend these results to maps that are not necessarily
dominant. In this direction, we propose the following conjectural
extension of the Fuchs-Mantova-Zannier theorem \ref{FMZ}. It can be seen as a
toric analogue of the classical Bertini's theorem as stated in
\cite[Th\'eor\`eme 6.3(3)]{Jou83}.

\begin{conjecture}  
\label{conj_FMZ}
Let $W$ be an irreducible quasiprojective variety and
$\varphi\colon W\rightarrow \Gm^n$ a map that is finite onto its image
and satisfies the property PB. There is a finite union $\cE$ of proper
subtori of $\Gm^n$ such that, for every subtorus $T$ of $\Gm^{n}$ with
\begin{displaymath}
  \dim(T)\ge  \codim(\ov{\varphi(W)})  +1
\end{displaymath}
that is not contained in $\cE$ and every point
$p\in \Gm^{n}(\C)$, we have that
$\varphi^{-1}(p\cdot T)$ is an irreducible subvariety of $W$.
\end{conjecture}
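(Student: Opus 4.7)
\medskip\noindent\textbf{Proof plan.}
The plan is to reduce Conjecture \ref{conj_FMZ} to Theorem \ref{FMZ} through a thickening construction that converts the non-dominant setting into a dominant one. Set $X=\ov{\varphi(W)}$, $d=\dim X=\dim W$ (as $\varphi$ is finite onto its image), and $c=n-d=\codim X$; the hypothesis reads $\dim T\ge c+1$. The central geometric observation is that whenever $T$ contains a $c$-dimensional subtorus $T_0$ \emph{transverse} to $X$ (meaning $X\cdot T_0=\Gm^n$, equivalently the quotient $\pi_{T_0}|_X$ is dominant), the auxiliary map
\[
\widetilde{\varphi}\colon W\times T_0\longrightarrow \Gm^n,\qquad (w,s)\longmapsto \varphi(w)\cdot s,
\]
is dominant onto $\Gm^n$ and finite onto its image. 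Moreover, since $s\in T_0\subseteq T$ forces $s^{-1}\cdot T=T$, one checks the identity $\widetilde{\varphi}^{-1}(p\cdot T)=\varphi^{-1}(p\cdot T)\times T_0$, so irreducibility transfers directly between $\widetilde{\varphi}^{-1}(p\cdot T)$ and $\varphi^{-1}(p\cdot T)$. The problem thus reduces to applying Theorem \ref{FMZ} to $\widetilde\varphi$.

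Three points must be addressed to execute this reduction. First, I would carve out a finite union $\cE_0$ of proper subtori such that, for every $T\not\subseteq \cE_0$ with $\dim T\ge c+1$, a transverse $T_0\subseteq T$ of dimension $c$ exists; this relies on the finiteness of the set of maximal proper subtori of $\Gm^n$ that contain a translate of $X$, so that for $T$ avoiding all of them a Grassmannian-type argument inside $T$ produces the required $T_0$. Second, I would verify that $\widetilde\varphi$ inherits property PB from $\varphi$: for an isogeny $\lambda$ of $\Gm^n$, a direct computation identifies $\lambda^{*}\widetilde W$ with the fibered product of $\pi_{T_0}\circ\lambda$ and $\pi_{T_0}\circ\varphi$ over $\Gm^n/T_0$, and its irreducibility follows from PB of $\varphi$ combined with the connectedness of the identity component of $\lambda^{-1}(T_0)$ and a monodromy argument adapted from the proof of Proposition \ref{red}. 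Third, Theorem \ref{FMZ} applied to $\widetilde\varphi$ yields a finite union $\widetilde{\cE}_{T_0}$ of proper subtori such that $\widetilde{\varphi}^{-1}(p\cdot T)$ is irreducible for every $T\not\subseteq \widetilde{\cE}_{T_0}$ and every $p\in\Gm^n(\C)$.

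The main obstacle is the \emph{uniformity} of $\widetilde{\cE}_{T_0}$ in the thickening parameter $T_0$. The conjecture demands a single finite $\cE$ working for every $T$, but a priori the FMZ exceptional set for $\widetilde\varphi$ depends on the choice of $T_0\subseteq T$, which itself depends on $T$. To surmount this I would try to show that a finite collection $T_0^{(1)},\dots,T_0^{(k)}$ of transverse $c$-dimensional subtori suffices to handle every $T\not\subseteq \cE_0$---so that $\cE=\cE_0\cup\bigcup_i \widetilde{\cE}_{T_0^{(i)}}$ does the job---or, preferably, to identify $\widetilde{\cE}_{T_0}$ with a set described intrinsically in terms of $\varphi$ and independent of $T_0$. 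Making this uniformity precise is the crux of the argument, and would likely require a refined analysis of how the construction in the proof of Theorem \ref{FMZ} behaves under variation of the thickening.
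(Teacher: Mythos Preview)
The statement you are attempting to prove is presented in the paper as an \emph{open conjecture}; the authors give no proof and explicitly describe it as a ``conjectural extension'' of Theorem~\ref{FMZ}. There is therefore no proof in the paper to compare your proposal against.

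As for the merits of your sketch: the thickening construction $\widetilde{\varphi}(w,s)=\varphi(w)\cdot s$ and the identity $\widetilde{\varphi}^{-1}(p\cdot T)=\varphi^{-1}(p\cdot T)\times T_0$ for $T_0\subseteq T$ are correct and constitute a natural first move. However, two of the three steps you flag are genuine gaps rather than routine verifications. First, the claim that $\widetilde{\varphi}$ inherits property~PB from $\varphi$ is not substantiated; your description (``a direct computation identifies $\lambda^{*}\widetilde W$ with\dots'' followed by ``a monodromy argument adapted from the proof of Proposition~\ref{red}'') is a hope, not an argument, and it is not clear that the irreducibility of $\lambda^{*}W$ alone controls the irreducibility of $\lambda^{*}(W\times T_0)$ for the twisted map. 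Second, and more seriously, you correctly isolate the uniformity of $\widetilde{\cE}_{T_0}$ in $T_0$ as ``the crux of the argument'' and then do not resolve it: neither of your two suggested routes (a finite covering family of $T_0^{(i)}$'s, or an intrinsic description of $\widetilde{\cE}_{T_0}$) is carried out, and there is no evident reason either should succeed. Since this uniformity is exactly what separates the conjecture from the known dominant case, your proposal does not close the gap; it is a plausible outline of where a proof might begin, but it leaves the conjecture open, as does the paper.
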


Similarly, we propose the  following conjectural extension of Theorem \ref{TBT}.

\begin{conjecture}
\label{conj_TBT}
Let $W$ be an irreducible quasiprojective variety and
$\varphi\colon W\rightarrow \Gm^n$ a map that is finite onto its
image. There is a finite union $\cE$ of proper subtori of $\Gm^n$ and
a finite set $\Lambda$ of isogenies of $\Gm^n$ such that, for each
subtorus $T\subset \Gm^{n}$ with
\begin{displaymath}
\dim(T)\ge \codim(\ov{\varphi(W)})  +1
\end{displaymath}
and each point
$p\in \Gm^{n}(\C)$, one of the next conditions
holds:
\begin{enumerate}
\item \label{item:23}  $T\subseteq\cE$;
\item \label{item:24} there is $\lambda\in\Lambda$ with $\lambda^{*} W$
  reducible and a subtorus $T'\subset \Gm^{n}$ with $\lambda$ inducing
  an isomorphism $T'\to T$;
\item \label{item:25} $\varphi^{-1}(p\cdot T)$ is irreducible. 
\end{enumerate}
\end{conjecture}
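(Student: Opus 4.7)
The plan is to prove Conjecture \ref{conj_TBT} by reducing it to Conjecture \ref{conj_FMZ}, following the template of the proof of Theorem \ref{TBT}. The main steps are an extension of Corollary \ref{cor:2} to maps that are finite onto their image (but not necessarily dominant), followed by the same analysis of pullbacks of subtori through the resulting PB factorization as in Theorem \ref{TBT}. The genuine obstacle is Conjecture \ref{conj_FMZ} itself.

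For the non-dominant analog of Corollary \ref{cor:2}, note that Proposition \ref{red} and Lemma \ref{split} carry over verbatim since their proofs make no use of dominance. The task is to produce a factorization $\varphi=\lambda\circ\rho$ with $\lambda$ an isogeny of $\Gm^n$ and $\rho\colon W\to\Gm^n$ satisfying the property PB. The degree argument of Corollary \ref{cor:2} requires some care: if $\rho=\mu\circ\rho'$ for an isogeny $\mu$ of $\Gm^n$, then $\deg(\rho)=|\Stab_{\ker(\mu)}(\ov{\rho'(W)})|\cdot\deg(\rho')$, which equals $\deg(\rho')$ when $\ker(\mu)$ acts freely on $\ov{\rho'(W)}$; bare minimality of the degree therefore does not immediately yield a contradiction. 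To handle this, one first reduces to the case where $\ov{\varphi(W)}$ has trivial connected stabilizer by quotienting $\Gm^n$ by this subtorus, and then refines the minimality condition to take into account the order of the stabilizer of $\ov{\rho(W)}$.

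Granted this factorization, set $\cE=\lambda(\cE')$ where $\cE'$ is the finite union of proper subtori produced by Conjecture \ref{conj_FMZ} applied to $\rho$, and let $\Lambda$ be the finite family of isogenies $\mu_H$ attached to the proper subgroups $H\subsetneq\ker(\lambda)$ as in equation \eqref{eq:13}. Since $\lambda$ is surjective with finite fibers, $\codim(\ov{\rho(W)})=\codim(\ov{\varphi(W)})$, so the dimension hypothesis on $T$ is preserved under preimage. For $T\not\subset\cE$ of dimension at least $\codim(\ov{\varphi(W)})+1$, decompose $\lambda^{-1}(T)=\bigsqcup_{i=1}^{k}T_i$ as a disjoint union of torsion cosets of a common subtorus. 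If $k=1$, then $T_1=\lambda^{-1}(T)$ is a subtorus of the same dimension not contained in $\cE'$, and $\varphi^{-1}(p\cdot T)=\rho^{-1}(q_0\cdot T_1)$ is irreducible for any $q_0\in\lambda^{-1}(p)$, giving condition \eqref{item:25}. If $k\ge 2$, take $H=\Stab_{\ker(\lambda)}(T_1)\subsetneq\ker(\lambda)$; Lemma \ref{split} and Proposition \ref{red} then yield condition \eqref{item:24} with $\mu_H\in\Lambda$.

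The hardest part is proving Conjecture \ref{conj_FMZ}: a uniform toric Bertini-type irreducibility for preimages under PB maps $\rho$ that are only finite onto a proper subvariety of $\Gm^n$, for subtori $T$ with $\dim(T)>\codim(\ov{\rho(W)})$. A natural attempt is to combine Theorem \ref{FMZ} with the classical Bertini irreducibility theorem \cite[Th\'eor\`eme 6.3(3)]{Jou83}: first show that the section $\ov{\rho(W)}\cap (p\cdot T)$ is irreducible for $T$ outside a finite union of proper subtori, and then transfer the irreducibility to its preimage in $W$ via the PB hypothesis. The essential new input will be obtaining uniformity in both $p$ and $T$, namely a single exceptional locus $\cE$ valid for the entire family---the same passage that distinguishes Theorem \ref{FMZ} from the earlier pointwise statement of \cite{Zannier:hiaag}.
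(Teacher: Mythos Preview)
The statement you are attempting to prove is labeled \emph{Conjecture} in the paper, and indeed the paper offers no proof of it; it is explicitly proposed as an open conjectural extension of Theorem~\ref{TBT}. So there is no ``paper's own proof'' to compare against.

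Your proposal is likewise not a proof: it is a reduction of Conjecture~\ref{conj_TBT} to Conjecture~\ref{conj_FMZ}, together with an honest acknowledgment that Conjecture~\ref{conj_FMZ} is the genuine obstacle. The reduction you sketch is the natural one and closely parallels the paper's derivation of Theorem~\ref{TBT} from Theorem~\ref{FMZ}: factor $\varphi$ through an isogeny to obtain a PB map, apply the FMZ-type input to that map, and use the same case split on the number of torsion cosets in $\lambda^{-1}(T)$ together with Lemma~\ref{split} and Proposition~\ref{red}. Your observation that the degree-minimization argument of Corollary~\ref{cor:2} needs care in the non-dominant setting (because $\deg(\rho)$ need not equal $\#\ker(\mu)\cdot\deg(\rho')$ when the image of $\rho'$ meets $\ker(\mu)$ nontrivially) is a genuine point, and your suggested fix via the stabilizer is reasonable though not carried out in detail.

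But the core content---a uniform toric Bertini irreducibility statement for non-dominant PB maps, i.e.\ Conjecture~\ref{conj_FMZ}---remains unproven in your proposal as in the paper. Your closing paragraph correctly identifies this as the hard part and gestures at combining Theorem~\ref{FMZ} with Jouanolou's Bertini theorem, but does not supply the argument. In short: you have outlined the expected implication Conjecture~\ref{conj_FMZ} $\Rightarrow$ Conjecture~\ref{conj_TBT}, which is consistent with the paper's presentation, but neither you nor the paper proves either conjecture.
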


\section{Pullbacks of Laurent polynomials by monomial maps} \label{sec:pullb-laur-polyn}

 We next prove Theorem~\ref{BT-poly} stated in the introduction. To this
 end, we  first 
recall some notation and introduce some auxiliary results.

Let $\bft =(t_1,\ldots,t_{k})$ be a set of $k$ variables. A 
matrix $A=(a_{i,j})_{i,j}\in\Z^{n\times k}$ defines  the family of
$n$ monomials in the variables $\bft$ given by
$$
\bft^A=\Big(\prod_{j=1}^{k} t_j^{a_{1,j}},\ldots,\prod_{j=1}^k t_j^{a_{n,j}}\Big).
$$
The rule $\bft\mapsto \bft^{A}$ defines a $k$-parameter monomial map
$\Gm^{k}\rightarrow \Gm^{n} $. This is a group morphism and indeed,
every group morphism from $\Gm^{k}$ to $\Gm^{n}$ is of this form. The
isogenies of $\Gm^{n}$ correspond to the nonsingular matrices of
$\Z ^{n\times n}$.

Given $\bfa=(a_{1},\dots, a_{n})\in \Z^{n}$, we can consider it as a
row vector, that is, as a matrix in $\Z^{1\times n}$. In this case,
\begin{displaymath}
  \bfx^{\bfa}= \prod_{j=1}^{n}x_{j}^{a_{j}}
\end{displaymath}
is an $n$-variate monomial.  Row vectors give characters of $\Gm^{n}$,
that is, group morphisms $\Gm^{n}\to \Gm$. When  $\bfa$ is
primitive, the kernel of its associated character is a subtorus of
$\Gm^{n}$ of codimension 1, and every such subtorus arises in this way.

Else, we can  consider $\bfa$ as a column vector, that is, as a
matrix in $\Z^{n\times 1}$. Then
\begin{displaymath}
  t^{\bfa}=(t^{a_{1}},\dots, t^{a_{n}})
\end{displaymath}
is a collection of $n$ univariate monomials in a variable $t$. Column
vectors give group morphisms $\Gm\to \Gm^{n}$. When $\bfa\ne 0$, the
image of such a morphims is a subtorus of $\Gm^{n}$ of dimension 1,
that we denote by $T_{\bfa}$. When $\bfa$ is primitive, the associated
group morphism $\Gm\to \Gm^{n}$ gives an isomorphism between $\Gm$ and
$T_{\bfa}$.

For subvarieties of tori, fibered products like those in \eqref{eq:8}
can be expressed in more concrete terms. The next lemma gives such an
expression for the case of hypersurfaces.

\begin{lemma} \label{lemm:3} Let $F \in \C[\bfx^{\pm 1},z^{\pm1}]$,
  $G\in \C[\bfx^{\pm1}]\setminus \{0\}$, and $A\in \Z^{n\times k}$.
  Let $W$ be the hypersurface of $\Gm^{n+1}\setminus V(G)$ defined by
  $F$, $\pi\colon W\to \Gm^{n}$ the map defined by
  $\pi(\bfx,z)= \bfx$, and $\lambda \colon \Gm^{k}\to \Gm^{n}$ the
  group morphism defined by $\lambda(\bft)= \bft^{A}$.  Then
  $\Gm^{k}\times_{\lambda,\pi} W $ is isomorphic to the subscheme of
  $\Gm^{k+1}\setminus V(G(\bft^{A})) $ defined by $F(\bft^{A},z)$.
\end{lemma}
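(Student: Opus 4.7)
\smallskip\noindent\textbf{Proof sketch for Lemma \ref{lemm:3}.}
The plan is to translate the fibered product into a tensor product of coordinate rings and then recognize the answer. Both sides of the asserted isomorphism are affine schemes, so it suffices to exhibit an isomorphism of the corresponding $\C$-algebras.

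First I would write the structure maps explicitly. The open subscheme $\Gm^{n+1}\setminus V(G)$ has coordinate ring $\C[\bfx^{\pm1},z^{\pm1}]_{G}$, and so its closed subscheme $W$ defined by $F$ has coordinate ring $R_{W}=\C[\bfx^{\pm1},z^{\pm1}]_{G}/(F)$. The projection $\pi\colon W\to \Gm^{n}=\Spec(\C[\bfx^{\pm1}])$ corresponds to the inclusion $\C[\bfx^{\pm1}]\hookrightarrow R_{W}$. Writing $A_{1},\dots,A_{n}$ for the rows of the matrix $A$, the monomial map $\lambda\colon \Gm^{k}\to \Gm^{n}$, $\bft\mapsto \bft^{A}$, corresponds to the $\C$-algebra morphism
\begin{displaymath}
\C[\bfx^{\pm1}]\longrightarrow \C[\bft^{\pm1}], \qquad x_{i}\longmapsto \bft^{A_{i}}.
\end{displaymath}

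Next I would compute the fibered product using the universal property, which on coordinate rings is a tensor product:
\begin{displaymath}
\cO(\Gm^{k}\times_{\lambda,\pi}W)=\C[\bft^{\pm1}]\otimes_{\C[\bfx^{\pm1}]} \bigl(\C[\bfx^{\pm1},z^{\pm1}]_{G}/(F)\bigr).
\end{displaymath}
Since tensor product commutes with localization and with taking quotients, and since $\C[\bft^{\pm1}]\otimes_{\C[\bfx^{\pm1}]}\C[\bfx^{\pm1},z^{\pm1}]\simeq \C[\bft^{\pm1},z^{\pm1}]$ via $x_{i}\mapsto \bft^{A_{i}}$, this tensor product simplifies to
\begin{displaymath}
\C[\bft^{\pm1},z^{\pm1}]_{G(\bft^{A})}\big/\bigl(F(\bft^{A},z)\bigr),
\end{displaymath}
which is exactly the coordinate ring of the subscheme of $\Gm^{k+1}\setminus V(G(\bft^{A}))$ cut out by $F(\bft^{A},z)$. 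The resulting isomorphism of $\C$-algebras yields the claimed isomorphism of schemes.

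There is no real obstacle here; the only point requiring care is to keep track of the two localizations (inverting $G$ and inverting $G(\bft^{A})$) while interchanging them with the tensor product and quotient, but these are standard commutations for flat base change and for forming quotients by an ideal. Note that we make no claim about reducedness, which is fortunate since $\lambda$ is not an isogeny unless $k=n$ and $A$ is nonsingular, so Lemma \ref{lemm:2} does not apply in general.
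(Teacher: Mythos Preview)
Your proof is correct and follows essentially the same approach as the paper: both compute the fibered product as a tensor product of coordinate rings over $\C[\bfx^{\pm1}]$ and then simplify by pushing the localization and quotient through the base change $x_{i}\mapsto \bft^{A_{i}}$. The only cosmetic difference is that the paper presents $\C[\bft^{\pm1}]$ as the quotient $\C[\bfx^{\pm1},\bft^{\pm1}]/(\bfx-\bft^{A})$ before tensoring, whereas you treat it directly as a $\C[\bfx^{\pm1}]$-algebra; the computations are otherwise identical.
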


\begin{proof}
The maps $\pi $ and $\lambda$ correspond to the morphisms of
$\C$-algebras
\begin{displaymath}
  \C[\bfx^{\pm1}]\longrightarrow \C[\bfx^{\pm1},z^{\pm1}]_{G}/F \and
  \C[\bfx^{\pm1}]\longrightarrow \C[\bft^{\pm1}]\simeq \C[\bfx^{\pm1},\bft^{\pm1}]/(\bfx-\bft^{A}),
\end{displaymath}
and the fibered product $\Gm^{k}\times_{\lambda,\pi} W $ is the
scheme associated to the tensor product
\begin{displaymath}
\C[\bfx^{\pm1},z^{\pm1}]_{G}/F \otimes_{ \C[\bfx^{\pm1}]}
 \C[\bfx^{\pm1},\bft^{\pm1}]/(\bfx-\bft^{A})  .
\end{displaymath}
This tensor product is isomorphic to the $\C$-algebra
\begin{displaymath}
  \C[\bfx^{\pm1},z^{\pm1},\bft^{\pm1}]_{G}/(F,\bfx-\bft^{A})  
\simeq 
\C[z^{\pm1},\bft^{\pm1}]_{G(\bft^{A})}/(F(\bft^{A},z)),
\end{displaymath}
which gives the statement.
\end{proof}

\begin{lemma} \label{lemm:1} Let $f\in\C(t)[z]$ be an irreducible
  polynomial of degree $d\geq1$, and such that $f(t^m,z)$ is reducible
  for some $m\in\N$. There is $e\in\N$ dividing $\gcd(m,d)$ such
  that $f(t^e,z)$ is also reducible.
\end{lemma}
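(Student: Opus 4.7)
The plan is to use the cyclic Galois structure of the extension $\C(s)/\C(s^m)$ to interpret the reducibility of $f(s^m,z)$ Galois-theoretically, and then to descend the resulting factorization to one arising from a substitution of the form $t \mapsto t^e$.

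First I introduce a fresh variable $s$ and set $K = \C(s^m)$, $L = \C(s)$. The extension $L/K$ is Galois with cyclic group $G = \langle \sigma \rangle$ of order $m$, where $\sigma(s) = \zeta s$ for a primitive $m$-th root of unity $\zeta$. Under the identification $K \cong \C(t)$, the polynomial $f(t,z)$ is irreducible of degree $d$ in $K[z]$, and, being separable in characteristic zero, it remains squarefree in $L[z]$. The standard Galois-theoretic argument then shows that $G$ acts transitively on the monic irreducible factors of $f(s^m, z)$ in $L[z]$: if there were more than one orbit, the product over any single orbit would give a proper $G$-invariant, hence $K$-rational, factor of $f(s^m, z)$, contradicting its irreducibility over $K$. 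Let $r \geq 2$ be the number of these factors. Each has the same $z$-degree $d/r$, so $r \mid d$; and $r$ equals the index in $G$ of the stabilizer $H$ of any one factor $g$, so $r \mid m$. Hence $r \mid \gcd(m,d)$.

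Next, $H$ is the unique subgroup of the cyclic group $G$ of order $m/r$, and its fixed field is $L^H = \C(s^{m/r})$. Thus $g(s,z) \in \C(s^{m/r})[z]$, and we may write $g(s,z) = h(s^{m/r}, z)$ for some $h(T,z) \in \C(T)[z]$ in a new variable $T$. Both $g$ and $f(s^m,z) = f((s^{m/r})^r,z)$ lie in $\C(s^{m/r})[z]$, and since $g$ is monic in $z$, polynomial division in $\C(s^{m/r})[z]$ lifts the divisibility $g \mid f(s^m, z)$ from $L[z]$ to this subring. Pulling back along the isomorphism $\C(T) \to \C(s^{m/r})$, $T \mapsto s^{m/r}$, yields $h(T,z) \mid f(T^r, z)$ in $\C(T)[z]$. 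Since $\deg_z h = d/r < d$, this exhibits a proper factor of $f(T^r, z)$, so $f(t^r, z)$ is reducible in $\C(t)[z]$; taking $e = r$ completes the argument.

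The main point requiring care is the descent step, namely that the divisibility $g \mid f(s^m, z)$ holds already in $\C(s^{m/r})[z]$ and not merely in $L[z] = \C(s)[z]$: this is what converts the Galois-orbit picture into a statement about the substitution $t \mapsto t^e$. It reduces to polynomial division once $g$ is taken monic, and it is the bridge between the abstract Galois-theoretic input and the concrete arithmetic conclusion of the lemma.
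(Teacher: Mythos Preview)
Your proof is correct and follows essentially the same approach as the paper: both arguments use the transitive action of the $m$-th roots of unity (equivalently, of $\Gal(\C(s)/\C(s^m))$) on the irreducible factors of $f(t^m,z)$, identify the stabilizer as the subgroup of index $r=k$ (which divides both $d$ and $m$), and then descend a factor to $\C(t^{m/r})[z]$ to produce a nontrivial factorization of $f(t^r,z)$. The only cosmetic difference is that the paper phrases the action in terms of torsion points and invokes its geometric Lemmas to get separability, whereas you use Galois-theoretic language and appeal directly to characteristic zero.
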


\begin{proof}
The proof relies on the action of torsion points on irreducible factors as in 
\cite[Proposition 2.1]{Zannier:hiaag}.

  By Lemma \ref{lemm:3}, the subscheme of $\Gm^{2}$ defined by
  $f(t^{m},z)$ is isomorphic to the pullback $[m]^{*} V(f)$, with
  $[m]$ the $m$-th multiplication map of $\Gm$. By Lemma \ref{lemm:2},
  this pullback is reduced, and so $f(t^{m},z)$ is
  separable. Consider its decomposition into distinct irreducible
  factors 
\begin{equation}\label{eq:15}
  f(t^{m},z)= \prod_{i=1}^{k} p_{i},
\end{equation}
with
$k\ge 2$.

The group $\upmu_{m}$ of $m$-th roots of the unity acts on the set of
these irreducible factors by
$p_{i}(t,z)\mapsto p_{i}(\zeta\cdot t,z)$, $i=1,\dots, k$, for
$\zeta\in \upmu_{m}$.  Let $\cP\subset \{p_{1},\dots, p_{k}\}$ be a
nonempty orbit of this action. The polynomial
\begin{equation*}
 \prod_{p\in \cP} p 
\end{equation*}
is invariant under the action of $\upmu_{m}$, and so it is of the form
$g(t^{m},z)$ with $g\in \C(t)[z]$.  This product is a nontrivial
factor of $f(t^{m},z)$, and so $g$ coincides with $f$ up to a
scalar. It follows that $\cP= \{p_{1},\dots, p_{k}\}$ and so the action
is transitive. In particular, all the $p_{i}$'s have the same degree
in the variable $z$, and so this degree is positive and $k | d$.

The stabilizer of an irreducible factor $p_{i}$ is a subgroup of
$\upmu_{m}$, hence it is of the form $\upmu_{l}$ with $l | m$.  Since
the action is transitive and $\upmu_m$ is abelian, this subgroup does
not depend on the choice of $p_{i}$.  
Moreover, $m/l$ is equal to $k$,
the number of irreducible factors of $f(t^{m},z)$, also because of the
transitivity of the action.

By the invariance of each  $p_{i}$  under the action of $\upmu_{l}$, 
there is $q_{i}\in \C(t) [z] \setminus \C(t)$ with
$p_{i}=q_{i}(t^{l},z)$. It follows from \eqref{eq:15} that
\begin{displaymath}
  f(t^{e},z) =  \prod_{i=1}^{k} q_{i}(t,z),
\end{displaymath}
with $e=m/l$. Clearly $e|m$ and as explained, $e=k$, and so this
quantity also divides~$d$, completing the proof.
\end{proof}

\begin{lemma}
\label{specialization}
Let $F\in\C[\bfx,z] $ be an irreducible polynomial of
degree $d\geq1$ in the variable $z$, and
$G\in \C[\bfx]\setminus \{0\}$ its leading coefficient.

\begin{enumerate}
\item \label{item:13} Let $W=V(F)\setminus V(G) \subset \Gm^{n+1}$ and
  $\pi\colon W\rightarrow \Gm^{n}$ the map defined by
  $\pi(\bfx,z)=\bfx$. The image of $\pi$ is the open subset
  $\Gm^{n}\setminus V(G)$ of $\Gm^{n}$, and this map is finite onto
  this open subset.

\item \label{item:16} There is a finite subset $\Delta_F$ of $\Z^n$
  such that for $\ag\in\Z^n$ with $\langle \cg,\ag\rangle\neq0$ for
  all $\cg\in\Delta_F$, the polynomial $F(t^\ag,z)$ has degree $d$ in
  the variable $z$.

\item \label{item:19} If $A\in \Z^{n\times n}$ is nonsingular, then
  $F(\bfx^{A},z)$ has no nontrivial factors in $\C[\bfx^{\pm1}]_{G}$.
\end{enumerate}
\end{lemma}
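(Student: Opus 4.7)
The plan is to prove the three parts in succession.

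For part \eqref{item:13}, the inclusion $\pi(W)\subseteq\Gm^n\setminus V(G)$ is immediate from the definition. Dividing $F=Gz^d+G_{d-1}z^{d-1}+\cdots+G_0$ by $G$ expresses $z^d$ as a $\C[\bfx^{\pm1}]_G$-linear combination of $1,z,\ldots,z^{d-1}$, so $z$ is integral over $\C[\bfx^{\pm1}]_G$. The constant coefficient $G_0\in\C[\bfx]$ is nonzero (otherwise $z\mid F$, contradicting irreducibility together with $d\geq1$), and the relation $F/z=0$ yields an analogous integral relation for $z^{-1}$. Hence the coordinate ring of $W$ is module-finite over $\C[\bfx^{\pm1}]_G$, so $\pi$ is finite. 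Since $W$ is irreducible of dimension $n$ and the generic fiber has $d$ nonzero roots, the image has full dimension; by the closedness of finite maps, $\pi$ surjects onto $\Gm^n\setminus V(G)$.

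For part \eqref{item:16}, expand $G=\sum_{\bg\in\supp(G)}c_\bg\bfx^\bg$ with $c_\bg\in\C^\times$. The leading coefficient of $F(t^\ag,z)$ in $z$ equals
\[
G(t^\ag)=\sum_{\bg\in\supp(G)}c_\bg\,t^{\langle\bg,\ag\rangle}\in\C[t^{\pm1}],
\]
which is nonzero precisely when the exponents $\langle\bg,\ag\rangle$ are pairwise distinct as $\bg$ ranges over $\supp(G)$. Setting $\Delta_F:=\{\bg-\bg'\mid\bg,\bg'\in\supp(G),\ \bg\neq\bg'\}$, a finite subset of $\Z^n$, the condition $\langle\cg,\ag\rangle\neq0$ for every $\cg\in\Delta_F$ guarantees $G(t^\ag)\neq0$, and thus $F(t^\ag,z)$ has degree exactly $d$ in $z$.

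For part \eqref{item:19}, suppose for contradiction that $H(\bfx)\in\C[\bfx^{\pm1}]$ is a non-unit factor of $F(\bfx^A,z)$ in $\C[\bfx^{\pm1}]_G$. Then $H$ divides every coefficient of $F(\bfx^A,z)$ viewed as a polynomial in $z$; in particular it divides the leading coefficient $G(\bfx^A)$. The nonsingularity of $A$ makes $\bfx\mapsto\bfx^A$ an isogeny of $\Gm^n$, so $V(G(\bfx^A))$ is a proper closed subvariety. A common zero $\bfx_0$ of $H$ with $G(\bfx_0^A)\neq0$ would force $F(\bfx_0^A,z)\equiv0$ as a polynomial in $z$, contradicting the non-vanishing of its leading coefficient. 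Hence $V(H)\subseteq V(G(\bfx^A))$, so by the Nullstellensatz $H$ divides a power of $G(\bfx^A)$, which makes $H$ a unit in the appropriate localization—the contradiction.

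The most delicate step is the finiteness claim in part \eqref{item:13}: integrality of $z^{-1}$ over $\C[\bfx^{\pm1}]_G$ is not automatic and rests on the non-vanishing of the constant coefficient $G_0$, itself a consequence of the irreducibility of $F$.
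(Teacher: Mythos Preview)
Part~\eqref{item:16} is fine; your $\Delta_F$ is larger than the paper's (which fixes one exponent $\bfc_1$ and keeps only the differences $\bfc_j-\bfc_1$), but both work.

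There is a genuine gap in part~\eqref{item:13}. Dividing the relation $F=0$ by $z^d$ gives
\[
G_0(z^{-1})^d+G_1(z^{-1})^{d-1}+\cdots+G=0,
\]
and for this to witness integrality of $z^{-1}$ over $\C[\bfx^{\pm1}]_G$ you would need $G_0$ to be a unit there; knowing only $G_0\neq 0$ is not enough. In fact $z^{-1}$ is \emph{not} integral in general: for $n=1$ and $F=z^2-z+(x-1)$ (so $G=1$) the trace and norm of $z^{-1}$ over $\C(x)$ are both $1/(x-1)\notin\C[x^{\pm1}]$, and the projection $V(F)\subset\Gm^2\to\Gm$ is not finite (one root tends to $z=0$ as $x\to 1$). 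The paper sidesteps this entirely by taking the coordinate ring of $W$ to be $\C[\bfx^{\pm1},z]_G/(F)$, with $z$ rather than $z^{\pm1}$; then integrality of $z$ alone suffices and the map is visibly finite. In effect the paper reads $W$ inside $\Gm^n\times\A^1$, which is all that the later applications require.

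For part~\eqref{item:19} you take a different route from the paper. Your observation that any factor $H\in\C[\bfx^{\pm1}]$ must divide the leading coefficient $G(\bfx^A)$, and hence becomes a unit once $G(\bfx^A)$ is inverted, is correct and short. Note however that it only excludes factors that are non-units in $\C[\bfx^{\pm1}]_{G(\bfx^A)}$; it does not by itself show that $F(\bfx^A,z)$ has no nontrivial factor in $\C[\bfx^{\pm1}]$, which is the form actually used later (in the proof of Theorem~\ref{thm:1}). The paper obtains this stronger conclusion via the action of the kernel of the isogeny $\bfx\mapsto\bfx^A$ on the irreducible factors of $F(\bfx^A,z)$: the action is transitive, so all irreducible factors share the same $z$-degree $d/k\ge1$, and in particular none lies in $\C[\bfx^{\pm1}]$.
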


\begin{proof}
  For the first statement, the image of the map $\pi$ is contained in
  the open set $U=\Gm^{n}\setminus V(G)$. The induced map $W\to U$
  corresponds to the morphism of $\C$-algebras
\begin{displaymath}
  \C[\bfx^{\pm 1}]_{G} \longhookrightarrow \C[\bfx^{\pm1},z]_{G}/(F).
\end{displaymath}
This morphism is an integral extension because the leading term $G$ is
invertible in $\C[\bfx^{\pm 1}]_{G}$, and so the map $W\to U$ is
finite and, \emph{a fortiori}, surjective.

For the second statement, write
$G=\sum_{j=1}^{r} G_{j}\bfx^{\bfc_{j}}$ with $G_{j}\in \C^{\times}$
and $\bfc_{j}\in \N^{n}$, $j=1,\dots, r$, and consider the finite
subset of $\Z^{n}$ given by 
\begin{displaymath}
  \Delta_{F}=\{\bfc_{j}-\bfc_{1} \mid j=2,\dots, r\} .
\end{displaymath}
For $\bfa\in\Z^{n}$ with $\langle \cg,\ag\rangle\neq0$ for all
$\cg\in\Delta_F$, we have that $G(t^{\bfa})\ne 0$ and so
$\deg_{z}(F(t^{\bfa},z))=d$.

As for Lemma \ref{lemm:1}, the proof of the last assertion relies on the action of torsion points on irreducible factors, and so we only sketch it. Using Lemmas \ref{lemm:3} and
\ref{lemm:2}, we show that  $ F(\bfx^{A}, z)$ is
separable.  Let
\begin{displaymath}
  F(\bfx^{A},z)=\prod_{i=1}^{k}P_{i}
\end{displaymath}
the decomposition of this Laurent polynomial into distinct irreducible
factors.  The action of the finite group
$\{\bfx \in \Gm^{n}\mid \bfx^{A}=1\}$ on the the sets of these
irreducible factors is transitive, and so the $P_{i}$'s have the same
degree with respect to the variable $z$. Hence for $i=1,\dots, k$, we
have that $k \deg_{z}(P_{i}) = d \ge 1$.  In particular,
$\deg_{z}(P_{i})\ge 1$, proving the statement.
\end{proof}




\begin{proof}[Proof of Theorem~\ref{BT-poly}]
  The statement of this result, restricted to \emph{primitive} vectors  $\bfa\in \Z^{n}$,
  is a specialization of Theorem~\ref{TBT}. To see this, first
  reduce, multiplying by a suitable monomial, to the case when $F$ is
  an irreducible polynomial in $\C[\bfx,z ]$ of degree $d\ge 1$ in the
  variable $z$.  Set $W=V(F)\setminus V(G)$ and consider the map
\begin{equation*} 
\pi\colon   W\longrightarrow \Gm^{n}
\end{equation*}
defined by $\pi(\bfx,z)=\bfx$ for $(\bfx,z)\in W$. The
quasi-projective variety $W$ is irreducible and, by Lemma
\ref{specialization}\eqref{item:13}, this map is dominant and finite
onto its image, the open subset $U=\Gm^{n}\setminus V(G)$ of
$\Gm^{n}$. 

Let $\Lambda$ be a finite subset of isogenies of $\Gm^{n}$ and $\cE$ a
finite union of proper subtori of $\Gm^{n}$ satisfying the conclusion
of Theorem~\ref{TBT} applied to this map. Set then
$\Phi_{1}$ for the finite subset of nonsingular matrices in
$ \Z^{ n\times n} $ corresponding to the isogenies in $\Lambda$, and
$\Sigma_{1} $ for a finite subset of nonzero vectors of $\Z^{n}$ such that
\begin{equation}
  \label{eq:10}
 \cE\subset \bigcup_{\bfc\in \Sigma_{1}} V(\bfx^{\bfc}-1).
\end{equation}

For a primitive vector $\bfa\in \Z^{n}$, set $T_{\bfa}$ for the
1-dimensional subtorus defined as the image of the group morphism
$\Gm\to \Gm^{n}$. This map gives an isomorphism between $\Gm$ and
$T_{\bfa}$. By Lemma \ref{lemm:3}, the fiber $\pi^{-1}(T_{\bfa})$ is
isomorphic to the subscheme of $\Gm^{2} \setminus V(G(t^{\bfa}))$
defined by $F(t^{\bfa},z)$.  For the isogeny $\lambda$ associated to a
nonsingular matrix $M\in \Phi_{1}$, the same result shows that
$\lambda^{*}W$ is isomorphic to the subscheme of
$\Gm^{n+1}\setminus V(G) $ defined by $F(\bfx^{M},z)$.

The three alternatives from Theorem~\ref{TBT} applied to the map
$\pi$, the subtorus $T_{\bfa}$ and the point
$p=(1,\dots,1) \in \Gm^{n}(\C)$, then boil down to those in the
theorem under examination, as explained below.
\begin{enumerate}
\item \label{item:21} Suppose that $T_{\bfa}\subset \cE$. By
  \eqref{eq:10}, there is $\bfc\in \Sigma_{1}$ with
  $\langle \bfc,\bfa\rangle =0 $.
\item \label{item:22} Else suppose that there is an isogeny
  $\lambda \in \Lambda$ with $\lambda^{*}W$ reducible and a subtorus
  $T'$ of $\Gm^{n}$ with $\lambda$ inducing an isomorphism between
  $T'$ and $T_{\bfa}$. For $M\in \Phi_{1}$  the
  nonsingular matrix associated to $\lambda$, we have that  $\bfa\in \im(B)$
  and, by Lemma~\ref{lemm:3}, $F(\bfx^{M},z)$ is reducible.
\item\label{item:20} Else suppose that $\pi^{-1}(T_{\bfa})$ is
  irreducible in $\Gm^{2}\setminus V(G)$.  By Lemma~\ref{lemm:3}, this
  implies that $F(t^{\bfa},z)$ is irreducible in $\C(t)[ z^{\pm1}]$.
\end{enumerate}

We next enlarge these finite sets to cover the rest of the cases.  Let
$d\ge 1$ be the degree of $F$ in the variable $z$, and let $ e$ be a
divisor of $d$. If $F(\bfx^{e},z)$ is irreducible, we respectively
denote by $\Phi_{e}$ and $\Sigma_{e}$ the finite subsets of
nonsingular matrices in $ \Z^{ n\times n} $ and of nonzero vectors of
$\Z^{n}$ given by the application of Theorem~\ref{TBT} to this
polynomial.  Otherwise, we set $\Phi_{e} =\{I_{n}\}$ with $I_{n}$ the
identity matrix of $\Z^{n\times n}$, and $\Sigma_{e} =\emptyset$.  Set
also $\Delta $ for the finite subset of nonzero vectors in $\Z^{n}$
associated to $F$ by Lemma \ref{specialization}\eqref{item:16}.  Set
then
\begin{displaymath}
  \Phi = \bigcup_{e\mid d} e \, \Phi_{e} \and \Sigma= \Delta\cup \bigcup_{e\mid d}
  \Sigma_{e}. 
\end{displaymath}
By Theorem \ref{TBT} and the previous analysis, the statement holds
for all vectors of the form $e\, \bfb$ with $\bfb\in \Z^{n}$ primitive
and $e |d$. 

Given an arbitrary vector $\bfa\in \Z^{n}$, write $ \bfa= m\bfb$ with
$m\in \N$ and $\bfb \in \Z^{n}$ primitive, and set
\begin{displaymath}
  f=F(t^{\bfb},z)\in \C[t^{\pm1},z].
\end{displaymath}
Suppose that neither \eqref{item:8} nor \eqref{item:9} hold for
$\bfa$. Let $e\in \N$ be a common divisor of $d$ and $m$.  \emph{A
  fortiori}, these conditions do neither hold for $e\, \bfb$ and, as
explained before,
\begin{displaymath}
  f(t^{e},z)= F(t^{e\bfb},z)
\end{displaymath}
is irreducible in $\C(t)[z]$. By Lemma \ref{lemm:1}, we have that
$ F_{\bfa}=f(t^{m},z)$ is irreducible in $\C(t)[z]$, giving the
condition \eqref{item:10} for $\bfa$ and concluding the proof.
\end{proof}


\begin{remark} 
  \label{rem:4}
Using the toric  Bertini's theorem \ref{TBT} for cosets of arbitrary dimension, the
present polynomial version in Theorem \ref{BT-poly} might be extended
to $k$-parameter monomial maps for any $k$, and also to arbitrary
  translates of these monomial maps.  

  We have kept the present more restricted statement for the sake of
  simplicity, and also because it is sufficient for our application.
\end{remark}

\section{Factorization of sparse polynomials} 
\label{sec:fact-sparse-polyn}

Here we prove the results on the factorization of Laurent polynomials
announced in the introduction and in Section~\ref{sec:conj-sche-funct}. 
Theorem~\ref{thm:10} is easily seen to be implied by the following
statement. 

\begin{theorem}
\label{thm:1}
Let $F\in \C[\bfx^{\pm1},z^{\pm1}]$ without nontrivial factors in
$\C[\bfx^{\pm1}]$.  There are finite sets
$\Omega^{0}\subset \Z^{n\times n}$ of nonsingular matrices and
$\Gamma\subset \Z^n$ of nonzero vectors satisfying the property  that, for
$\bfa\in\Z^n\setminus \{\bfzero\}$, one of the next alternatives
holds:
\begin{enumerate}
\item \label{item:17} there is $\bfc\in\Gamma$ with
  $\langle \bfc,\bfa\rangle=0$;
\item \label{item:18} there are $M\in\Omega^{0}$ and $\bfb\in\Z^n$ with
  $\bfa=M\bfb$ such that if 
$$
F(\bfx^M,z)=\prod_{P}P^{e_{P}}
$$ 
is the irreducible factorization of $F(\bfx^M,z)$ in
$\C[\bfx^{\pm1},z^{\pm1}]$, then 
\begin{displaymath}
F(t^{\bfa},z)=\prod_{P}P(t^{\bfb},z)^{e_{P}}
\end{displaymath}
is the irreducible factorization of $F(t^{\bfa},z)$ in
$ \C(t)[z^{\pm1}]$.
\end{enumerate}
\end{theorem}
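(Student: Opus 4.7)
The plan is to iterate Theorem~\ref{BT-poly} on the irreducible factors of $F$ and of its successive monomial pullbacks $F(\bfx^{M},z)$, organized as a finite tree whose leaves will yield the finite set $\Omega^{0}$. First write $F=\prod_i F_i^{e_i}$ as the irreducible factorization in $\C[\bfx^{\pm 1},z^{\pm 1}]$; by hypothesis each $F_i\notin \C[\bfx^{\pm 1}]$, i.e.\ has positive $z$-degree. Since $\bfx\mapsto \bfx^{M}$ is \'etale for any nonsingular $M\in\Z^{n\times n}$ (in the vein of Lemma~\ref{specialization}\eqref{item:19}), the pullback $F_i(\bfx^{M},z)$ is squarefree, distinct irreducible factors of $F$ stay pairwise coprime after pullback, and every irreducible factor of $F_i(\bfx^{M},z)$ still has positive $z$-degree.

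Next, construct a rooted tree $\cT$ with root labeled by $F$. At each node labeled $H$, compute the irreducible factorization $H=\prod_j Q_j^{f_j}$ in $\C[\bfx^{\pm 1},z^{\pm 1}]$, apply Theorem~\ref{BT-poly} to each $Q_j$ (which lies in $\C[\bfx^{\pm 1},z^{\pm 1}]\setminus\C[\bfx^{\pm 1}]$) to obtain finite sets $\Phi^{Q_j}\subset\Z^{n\times n}$ of nonsingular matrices and $\Sigma^{Q_j}\subset\Z^{n}$ of nonzero vectors, and attach as a child of $H$ the pullback $H(\bfx^{M},z)$ for every $M\in\bigcup_j \Phi^{Q_j}$. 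Each such $M$ forces $Q_j(\bfx^{M},z)$ to be reducible with factors of $z$-degree strictly smaller than $\deg_z Q_j$, so the multiset of $z$-degrees of the irreducible factors of the label is strictly refined along each edge of $\cT$; hence $\cT$ has depth at most $\deg_z F$ and is finite.

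At a leaf $H$, with cumulative matrix $M=M_1\cdots M_k\in\Z^{n\times n}$ along the root-to-leaf path, every irreducible factor $Q_j$ of $H$ is \emph{stable}: alternative~(2) of Theorem~\ref{BT-poly} does not trigger for $Q_j$. Thus for $\bfb\in\Z^n$ not orthogonal to any $\bfc\in \bigcup_j\Sigma^{Q_j}$, each $Q_j(t^{\bfb},z)$ will be irreducible in $\C(t)[z^{\pm 1}]$ by alternative~(3). To ensure further that distinct $Q_j(t^{\bfb},z)$ remain pairwise non-associate in $\C(t)[z^{\pm 1}]$, I will add to the forbidden vectors on $\bfb$ the exponent differences coming from the nonzero $z$-resultants $\mathrm{Res}_z(Q_j,Q_{j'})\in\C[\bfx^{\pm 1}]$ for $j\neq j'$, and from the leading and trailing $z$-coefficients of each $Q_j$; this guarantees that each $Q_j(t^{\bfb},z)$ has the correct $z$-degree (so is non-unit) and that distinct ones are coprime in $\C(t)[z^{\pm 1}]$. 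Each such forbidden vector $\bfc$ on $\bfb$ is then transported to the integer vector $\Adj(M)^{T}\bfc\in\Z^n$, giving a forbidden vector on $\bfa=M\bfb$.

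Finally, take $\Omega^{0}$ to be the set of cumulative matrices at the leaves of $\cT$ and $\Gamma$ the union over all nodes of the transported forbidden vectors; both are finite. For $\bfa\in\Z^n\setminus\{\bfzero\}$ avoiding $\Gamma$, descend in $\cT$ by choosing at each node a child $H(\bfx^{M'},z)$ with $M'\in\Phi^{Q_j}$ whose image contains the current residual vector, whenever alternative~(2) triggers at that node; upon reaching a leaf, the irreducible factorization $F(\bfx^{M},z)=\prod_j Q_j^{f_j}$ will specialize to the irreducible factorization $F(t^{\bfa},z)=\prod_j Q_j(t^{\bfb},z)^{f_j}$ in $\C(t)[z^{\pm 1}]$. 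The hard part will be the coordination of the separation data across the whole tree: combining the $\Sigma^{Q_j}$ from Theorem~\ref{BT-poly} with the resultant-based pairwise-coprimality hyperplanes at every node, and transporting them back through the varying cumulative matrices $M_1\cdots M_k$, so that a single finite set~$\Gamma$ controls every $\bfa$ simultaneously.
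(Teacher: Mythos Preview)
Your tree construction is sound in outline and parallels the paper's induction on $\deg_z F$, but there is a small gap in the descent. At an internal node $H$ (cumulative matrix $M$, residual vector $\bfb$), it may happen that alternative~(3) of Theorem~\ref{BT-poly} holds for \emph{every} irreducible factor $Q_j$ of $H$ while alternative~(2) fails for each of them, i.e.\ $\bfb\notin\Im(M')$ for all $M'\in\bigcup_j\Phi^{Q_j}$. Then the specialization of the factorization of $H$ is already the irreducible factorization of $F(t^{\bfa},z)$, yet you cannot descend to a leaf, and $M$ is not in your $\Omega^{0}$. The fix is simply to let $\Omega^{0}$ consist of the cumulative matrices at \emph{all} nodes of $\cT$, not only the leaves; the tree is finite, so this set remains finite.

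Apart from that, your route genuinely differs from the paper's in how several irreducible factors are synchronized under a single matrix. The paper argues by induction on $\deg_z F$: once $F(\bfx^{M},z)=F_{1}F_{2}$ splits nontrivially, it applies the inductive hypothesis to $F_{1}$ and $F_{2}$ \emph{separately}, obtaining matrices $M_{1}\in\Omega^{0}_{1}$ and $M_{2}\in\Omega^{0}_{2}$, and then merges them by choosing $M'$ with $\Im(M')=\Im(M_{1})\cap\Im(M_{2})$, so that a single residual $\bfb'$ serves both branches. Your construction sidesteps this lattice-intersection trick entirely by always pulling back the \emph{whole} $H$ along each $M'\in\bigcup_j\Phi^{Q_j}$ and re-factoring at the child; the price is a possibly larger (but still finite) tree. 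Your explicit resultant conditions guaranteeing that distinct $Q_j(t^{\bfb},z)$ stay pairwise coprime in $\C(t)[z^{\pm1}]$ are a point the paper's proof glosses over, and they are genuinely needed for the displayed product to be ``the'' irreducible factorization with the asserted multiplicities.
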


\begin{proof}
  We proceed by induction on $\deg_{z}(F)$.  When $\deg_{z}(F)=0$, the
  statement is trivial, and so we assume that $\deg_{z}(F)\ge 1$.

If $F$ is irreducible, we respectively
  denote by $\Phi $ and $\Sigma $ the finite sets of nonsingular
  matrices in $\Z^{n\times n}$ and of nonzero vectors in $\Z^{n}$ from
  Theorem~\ref{thm:1} applied to this Laurent polynomial. If $F$ is
  reducible, we set $\Phi=\{I_{n}\}$ and $\Sigma=\emptyset$.

  Let $\ag\in\Z^n$.  When $F$ is irreducible, if the condition
  \eqref{item:8} in Theorem~\ref{BT-poly} holds, then the condition
  \eqref{item:17} in Theorem~\ref{thm:1} also holds by taking $\Gamma$
  as any finite set containing~$\Sigma $. Still in the irreducible
  case, if the condition \eqref{item:10} in Theorem~\ref{BT-poly}
  holds, the Laurent polynomial $F(t^\ag,z)$ is irreducible, and the
  condition \eqref{item:17} in Theorem~\ref{thm:1} holds provided that
  $\Omega^{0}$ contains $I_{n}$.

Else, suppose that the condition  \eqref{item:9} in
Theorem~\ref{BT-poly} holds, that is, there are
$M\in\Phi $ and $\bg\in\Z^n$ with  $\ag=M\bg$ and
$F(\xg^M,z)$ is reducible. Let
\begin{equation*}
F(\xg^M, z)=F_1 \, F_2
\end{equation*}
be a nontrivial factorization. By
Lemma~\ref{specialization}\eqref{item:19}, $F(\xg^M,z)$ has no factors
in $\C[\bfx^{\pm1}]$. Hence $\deg_{z}(F_{i})<\deg_{z}(F)$, $i=1,2$,
and by induction, Theorem~\ref{thm:1} holds for these Laurent
polynomials.  Let $\Omega^{0}_{i}$ and $\Gamma_{i}$ respectively denote
the finite sets of nonsingular matrices in $\Z^{n\times n}$ and of
nonzero vectors in $\Z^{n}$ whose existence is assured by this
theorem.

By construction, either  there is a vector
$\cg\in\Gamma_{1}\cup\Gamma_{2}$ with 
$\langle\cg,\bg\rangle=0$, or we can find $M_i\in\Omega_i$ and
$\bg_i\in\Z^n$ with  $\bg=M_i\bg_i$ and a decomposition
$$
F_i(\xg^{M_i},z)=\prod_{j=1}^{k_{i}}F_{i,j}
$$
with $F_{i,j}(t^{\bg_i},z)$ irreducible in $\C(t)[s^{\pm1}]$ for all
$i,j$.

Set
\begin{equation}
  \label{eq:14}
\Gamma=\{\Adj(M) \bfc \mid M\in \Phi,  \cg\in
\Gamma_{1}\cup\Gamma_{2}\}
\end{equation}
with $\Adj(M)$ the adjoint matrix of $M$.  If
$\langle\cg',\ag\rangle\neq0$ for all $\cg'\in\Gamma$, then
$\langle\cg,\bg\rangle \ne 0$ for all
$\cg\in \Gamma_{1}\cup\Gamma_{2}$ and so $F_{i,j}(t^{\bg_i},z)$ is
irreducible in $\C(t)[s^{\pm1}]$ for all $i,j$.

Consider the lattices $K_i=\Im(M_i)$, $i=1,2$, and set
$K=K_1\cap K_2$. Since $K$ is also a lattice, there is a nonsingular
matrix $M'\in\Z^{n \times n}$ with $K=\Im(M')$ and, since
$K\subseteq K_i$, there are nonsingular matrices $N_i$, $i=1,2$, with
$M'=M_iN_i$.  Furthermore, $\bg\in K$ implies that there is
$\bg'\in\Z^n$ with $\bg=M'\bg'=M_iN_i\bg'$. Hence
$ \bg_i=M_i^{-1}\bg=N_i\bg' $ and
\begin{equation*}
  F(\xg^{MM'},z)
 =F_1 (\xg^{M_1N_1},z)F_2(\xg^{M_2N_2},z )
=\prod_{i=1}^2\prod_{j=1}^{r_i} F_{i,j} (\xg^{N_i},z ).
\end{equation*}
Set $M''=MM'$, $G_{i,j}=F_{i,j} (\xg^{B_i},z )$ and consider the decomposition 
$$
F (\xg^{M''},z )=\prod_{i=1}^2\prod_{j=1}^{r_i} G_{i,j}. 
$$ 
We have $\ag=M\bg=M''\bg'$ and 
$$
G_{i,j}(t^{\bg'},z)=F_{i,j} (t^{B_i\bg'},z )=F_{i,j} (t^{\bg_i} ,z)
$$ 
is irreducible in $\C(t)[s^{\pm1}]$ for all $i,j$. The statement
follows by taking $\Omega^{0}$ as any finite set containing all the
matrices of the form $MM'$ for $M\in \Phi$, and $\Gamma$ as in
\eqref{eq:14}.
\end{proof}

We conclude by giving the proof of our main result.

\begin{proof}[Proof of Theorem \ref{thm:2}]
  We proceed by induction on $n$. When $n=0$ the statement is trivial,
  and so we assume that $n\ge 1$.  Let $F\in \C[\bfx^{\pm1},z^{\pm1}]$
  and write
  \begin{displaymath}
    F=CF'
  \end{displaymath}
  with $C\in \C[\bfx^{\pm1}]$ and $F'\in \C[\bfx^{\pm1},z^{\pm1}]$
  without nontrivial factors in $\C[\bfx^{\pm1}]$.  By Lemma
  \ref{specialization}\eqref{item:16}, there is a finite subset
  $\Delta\subset \Z^{n}$ such that $C(t^{\bfb})\ne 0$ for all
  $\bfb\in \Z^{n}$ with $\langle \bfc,\bfb\rangle\ne 0$ for all
  $\bfc\in \Delta$.  Let also $\Omega^{0}\subset \Z^{n\times n}$ and
  $\Gamma\in \Z^{n}$ be the finite subsets given by Theorem
  \ref{thm:1} applied to $F'$.

  Let $\bfa\in \Z^{n}$.  When $\langle \bfc,\bfa\rangle\ne 0$ for all
  $\bfc\in \Gamma\cup \Delta$, Theorem \ref{thm:1}\eqref{item:18}
  implies the statement, provided that we choose any finite subset
  $ \Omega\subset \Z^{n\times n}$ containing $\Omega^{0}$.

  Otherwise, suppose that there is $\bfc\in \Gamma\cup \Delta$ with
  $\langle \bfc,\bfa\rangle = 0$.  If $C(t^{\bfa},z)=0$, we add to the
  finite set $\Omega$ the matrix $M\in \Z^{n\times n}$ made by adding
  to $n-1$ zero columns to the vector $\bfa$.  Otherwise, choose a
  matrix $L\in \Z^{n\times (n-1)}$ defining a linear map
  $ \Z^{n-1}\rightarrow \Z^{n}$ whose image is the submodule
  $c^{\bot}\cap \Z^{n}$, and a vector $\bfd\in \Z^{n-1}$ with
  $\bfa=L\bfd$.  Let $\bfu=(u_{1},\dots, u_{n-1})$ be a set of $n-1$
  variables and set
  \begin{displaymath}
    G=F'(\bfu^{L})\in \C[\bfu^{\pm1},z^{\pm1}]  .
  \end{displaymath}

  By the inductive hypothesis, there is a finite subset
  $\Omega_{\bfc}\subset\Z^{(n-1)\times (n-1)}$ satisfying the
  statement of Theorem \ref{thm:2} applied to this Laurent
  polynomial. In particular, there are $N\in \Omega_{\bfc}$ and
  $\bfe\in \Z^{n-1}$ with $\bfd=N\bfe $ such that, for an irreducible
  factor $Q$ of $G(\bfu^{N},z)$, we have that $Q(t^{\bfe},z)$ is, as a
  Laurent polynomial in $\C(t)[z^{\pm1}]$, either a unit or an
  irreducible factor of $G( t^{\bfd},z)$.

  We have that $G(\bfu^{N},z)=F'(\bfu^{LN},z)$, and so $Q$ is an
  irreducible factor of this latter Laurent polynomial. Moreover,
  $\bfa=LN \bfe$.  Enlarging the matrix $LN\in \Z^{n\times (n-1)}$ to
  a matrix $M\in \Z^{n\times n}$ by adding to it a zero column at the
  end, and similarly enlarging the vector $\bfe$ to a vector
  $\bfb\in \Z^{n}$ by adding to it a zero entry at the end, the
  previous equalities are preserved with $M$ and $\bfb$ in the place
  of $LN$ and $\bfe$. Hence, $ \bfa=M\bfb $ and, if
  $Q(x_{1},\dots, x_{n-1})$ is an irreducible factor of
  $F'(\bfx^{M},z)$, then $Q(t^{\bfe},z)=Q(t^{\bfb},z)$ is, as a
  Laurent polynomial in $\C(t)[z^{\pm1}]$, either a unit or an
  irreducible factor of $G( t^{\bfd},z)=F(t^{\bfa},z)$.

  The statement then follows by also also adding to $\Omega$ all the
  matrices $M\in \Z^{n\times n}$ constructed in this way.
\end{proof}

\begin{remark}
  \label{rem:10}
  In the setting of Theorem \ref{thm:2}, the bivariate Laurent
  polynomials $F_{\bfa}$ can be defined as the pullback of the
  multivariate Laurent polynomial $F$ by the 2-parameter monomial map
  $(t,z)\mapsto (t,z)^{A}$ given by the matrix
\begin{displaymath}
  A=
  \begin{pmatrix}
    0 &1\\
a_{1} & 0\\
\vdots &\vdots\\
a_{n} & 0
  \end{pmatrix} \in \Z^{(n+1)\times 2}.
\end{displaymath}
In Conjecture \ref{conj:3} applied to $F$ and $k=2$, one can
consider \emph{all} matrices in $ \Z^{(n+1)\times 2}$, and so its setting is
more general than that of Theorem \ref{thm:2}. 

On the other hand, the conclusion of Conjecture \ref{conj:3} in this
situation is slightly weaker than that of Theorem \ref{thm:2}, since
it does not give the irreducible factorization of the $F_{\bfa}$ in
$\C(t)[z^{\pm1}]$, but rather its irreducible factorization modulo the
Laurent polynomials of the form $f(t^{d_{1}}z^{d_{2}})$ for a
univariate $f$ and $d_{1},d_{2}\in \Z$.
\end{remark}

\bibliographystyle{amsalpha}
\bibliography{biblio}

\newcommand{\noopsort}[1]{} \newcommand{\printfirst}[2]{#1}
  \newcommand{\singleletter}[1]{#1} \newcommand{\switchargs}[2]{#2#1}
  \def\cprime{$'$}
\providecommand{\bysame}{\leavevmode\hbox to3em{\hrulefill}\thinspace}
\providecommand{\MR}{\relax\ifhmode\unskip\space\fi MR }
\providecommand{\MRhref}[2]{%
  \href{http://www.ams.org/mathscinet-getitem?mr=#1}{#2}
}
\providecommand{\href}[2]{#2}
\begin{thebibliography}{Mum88}

\bibitem[AKS07]{AvendanoKrickSombra:fbslp}
M.~Avenda{\~n}o, T.~Krick, and M.~Sombra, \emph{Factoring bivariate sparse
  (lacunary) polynomials}, J. Complexity \textbf{23} (2007), 193--216.

\bibitem[ASZ17]{Am-So-Za}
F.~Amoroso, M.~Sombra, and U.~Zannier, \emph{Unlikely intersections and
  multiple roots of sparse polynomials}, Math. Z. \textbf{287} (2017),
  1065--1081.

\bibitem[FGS08]{FilasetaGranvilleSchinzel:igcdasp}
M.~Filaseta, A.~Granville, and A.~Schinzel, \emph{Irreducibility and greatest
  common divisor algorithms for sparse polynomials}, Number theory and
  polynomials, London Math. Soc. Lecture Notes Ser., vol. 352, Cambridge Univ.
  Press, 2008, pp.~155--176.

\bibitem[FMZ18]{FuchsMantovaZannier:fiptvb}
C.~Fuchs, V.~Mantova, and U.~Zannier, \emph{On fewnomials, integral points, and
  a toric version of {B}ertini's theorem}, J. Amer. Math. Soc. \textbf{31}
  (2018), 107--134.

\bibitem[Gre16]{Grenet:bdflmp}
B.~Grenet, \emph{Bounded-degree factors of lacunary multivariate polynomials},
  J. Symbolic Comput. \textbf{75} (2016), 171--192.

\bibitem[Har77]{Hartshorne:ag}
R.~Hartshorne, \emph{Algebraic geometry}, Graduate Texts in Math., vol.~52,
  Springer-Verlag, 1977.

\bibitem[Jou83]{Jou83}
J.-P. Jouanolou, \emph{Th\'eor\`emes de {B}ertini et applications}, Progr.
  Math., vol.~42, Birkh\"auser, 1983.

\bibitem[KK06]{KaltofenKoiran:fsdfmslpanf}
E.~Kaltofen and P.~Koiran, \emph{Finding small degree factors of multivariate
  supersparse (lacunary) polynomials over algebraic number fields}, I{SSAC}
  2006, ACM, 2006, pp.~162--168.

\bibitem[Len99]{Lenstra:flp}
H.~W. Lenstra, Jr., \emph{On the factorization of lacunary polynomials}, Number
  theory in progress, {v}ol. 1 ({Z}akopane-{K}o\'scielisko, 1997), de Gruyter,
  1999, pp.~277--291.

\bibitem[Mum88]{Mumford:rbvs}
D.~Mumford, \emph{The red book of varieties and schemes}, Lecture Notes in
  Math., vol. 1358, Springer-Verlag, 1988.

\bibitem[Sch65]{Schinzel:rppt}
A.~Schinzel, \emph{On the reducibility of polynomials and in particular of
  trinomials}, Acta Arith. \textbf{11} (1965), 1--34.

\bibitem[Sch70]{Schinzel:rlpI}
\bysame, \emph{Reducibility of lacunary polynomials. {I}}, Acta Arith.
  \textbf{16} (1969/1970), 123--159.

\bibitem[Sch90]{Sch1}
\bysame, \emph{An analog of {H}ilbert's irreducibility theorem}, Number theory
  ({B}anff, {AB}, 1988), de Gruyter, Berlin, 1990, pp.~509--514. \MR{1106684}

\bibitem[Sch00]{Schinzel:psrr}
\bysame, \emph{Polynomials with special regard to reducibility. {W}ith an
  appendix by {U}mberto {Z}annier}, Encyclopedia Math. Appl., vol.~77,
  Cambridge Univ. Press, 2000.

\bibitem[Zan10]{Zannier:hiaag}
U.~Zannier, \emph{Hilbert irreducibility above algebraic groups}, Duke Math. J.
  \textbf{153} (2010), 397--425.

\bibitem[ZS75]{ZariskiSamuel:caII}
O.~Zariski and P.~Samuel, \emph{Commutative algebra. {V}ol. {II}}, Graduate
  Texts in Math., vol.~29, Springer-Verlag, 1975.

\end{thebibliography}

\end{document}